\documentclass[a4paper,11pt]{article}
\usepackage[utf8]{inputenc}
\usepackage[english]{babel}
\usepackage[T1]{fontenc}
\usepackage[top=3.5cm, bottom=3.5cm, left=3cm, right=3cm]{geometry}
\usepackage{color}    
\usepackage{graphicx} 
\usepackage{listings}
\usepackage{tikz}
\usetikzlibrary{positioning}
\usepackage{fancyhdr} 
\usepackage{lastpage} 
\usepackage{amsmath,amssymb,mleftright} 
\usepackage{amsthm}
\usepackage{listings} 
\usepackage{parcolumns} 
\usepackage{array,tabu,booktabs} 
\usepackage{multirow} 
\usepackage{pgfplots,wrapfig}
\usepackage{titling}  
\usepackage{lmodern}  
\usepackage{verbatim,verbatimbox} 
\usepackage{fancyvrb} 
\usepackage{enumitem} 
\PassOptionsToPackage{hyphens}{url}\usepackage{hyperref} 
\usepackage{float}
\usepackage{caption}
\usepackage{subcaption}
\usepackage{animate}
\usepackage[final]{pdfpages}
\usepackage{diagbox}
\usepackage{hhline}
\usepackage{titlesec}
\usepackage{algpseudocode}
\usepackage[export]{adjustbox}
\usepackage{bigints}
\usepackage[makeroom]{cancel}
\usepackage{xparse}
\usepackage[ruled,vlined]{algorithm2e}
\usepackage{setspace}
\usepackage{mathtools}
\titlespacing*{\section}
{0pt}{5.5ex plus 1ex minus .2ex}{4.3ex plus .2ex}
\titlespacing*{\subsection}
{0pt}{5.5ex plus 1ex minus .2ex}{4.3ex plus .2ex}
\usepackage{rotating}

\setlength{\marginparwidth}{2cm}

\usepackage{titlesec}

\setcounter{secnumdepth}{4}

\titleformat{\paragraph}
{\normalfont\normalsize\bfseries}{\theparagraph}{1em}{}
\titlespacing*{\paragraph}
{0pt}{3.25ex plus 1ex minus .2ex}{1.5ex plus .2ex}

\makeatletter
\newcommand*\bigcdot{\mathpalette\bigcdot@{.5}}
\newcommand*\bigcdot@[2]{\mathbin{\vcenter{\hbox{\scalebox{#2}{$\m@th#1\bullet$}}}}}
\makeatother

\makeatletter
\def\BState{\State\hskip-\ALG@thistlm}
\makeatother
\definecolor{tempcolor}{rgb}{0.7773 0.0820 0.5195}

\newcommand{\lp}{\left(}
\newcommand{\rp}{\right)}

\newcommand{\norm}[1]{\ensuremath{\left\lVert  #1\right\rVert}} 

\newcommand{\divg}[1]{\ensuremath{\text{div}_g\left(  #1\right)}}

\usepackage{xcolor}
\definecolor{dkgreen}{rgb}{0,0.6,0}
\definecolor{dred}{rgb}{0.545,0,0}
\definecolor{dblue}{rgb}{0,0,0.545}
\definecolor{lgrey}{rgb}{0.9,0.9,0.9}
\definecolor{gray}{rgb}{0.4,0.4,0.4}
\definecolor{darkblue}{rgb}{0.0,0.0,0.6}
\definecolor{turquoise}{rgb}{0.2500,0.8750,0.8125}
\definecolor{indigo}{rgb}{0.2930, 0,0.5078}
\definecolor{mag}{rgb}{1, 0,1}
\definecolor{corn}{rgb}{0.3906,0.5820,0.9258}
\definecolor{mvr}{rgb}{0.7773,0.0820,0.5195}
\definecolor{dod}{rgb}{0.11719,0.5625,1}
\lstdefinelanguage{MatlabCostum}{
      backgroundcolor=\color{white},  
      basicstyle=\footnotesize \ttfamily \color{black} \bfseries,   
      breakatwhitespace=false,       
      breaklines=true,               
      captionpos=b,                   
      commentstyle=\color{dkgreen},   
      emph={repmat, ones},			
      keywordstyle=\color{blue},           
      escapeinside={\%*}{*)},                  
      frame=single,                  
      language=Matlab,                
      identifierstyle=\color{black},
      stringstyle=\color{blue},      
      numbers=left,                 
      numbersep=5pt,                  
      numberstyle=\tiny\color{black}, 
      rulecolor=\color{black},        
      showspaces=false,               
      showstringspaces=false,        
      showtabs=false,                
      stepnumber=1,                   
      tabsize=4,
      title=\lstname
}
\lstset {language=MatlabCostum,breaklines=true,inputpath=Matlab/}

\newtheorem{theorem}{Theorem}[section]

\newtheorem{proposition}[theorem]{Proposition}
\newtheorem{remark}[theorem]{Remark}
\newtheorem{conjecture}[theorem]{Conjecture}

\newcommand{\Div}{\operatorname{div}}

\newcommand{\Det}{\operatorname{det}}

\usepackage[colorinlistoftodos,prependcaption,textsize=tiny]{todonotes}

\allowdisplaybreaks

\usepackage[backend=biber,
            style=numeric,
            abbreviate=false,
            dateabbrev=false,
            alldates=long]{biblatex}

\addbibresource{references.bib}

\begin{document}

\title{Reconstructing anisotropic conductivities\\ on two-dimensional Riemannian manifolds\\ from power density measurements}

\author{Kim Knudsen, Steen Markvorsen, Hjørdis Schlüter}
\maketitle

\begin{abstract}
We consider an electrically conductive compact two-dimensional Riemannian manifold with smooth boundary.
This setting defines a natural conductive Laplacian on the manifold and hence also voltage potentials, current fields and corresponding power densities arising from suitable boundary conditions. Motivated by Acousto-Electric Tomography we show that if the manifold has  genus zero and the metric is known, then the anisotropic conductivity can be recovered uniquely and constructively from knowledge of a few power densities. We illustrate the procedure numerically by reconstructing an anisotropic conductivity on the catenoid, i.e. the classical genus zero minimal surface in three-space.
\end{abstract}

\section{Introduction and statement of the main result}
Let $(M, g)$ denote a compact two-dimensional Riemannian manifold with smooth boundary $\partial M.$ An electric conductivity on $M$ is modelled by a -- generally anisotropic -- $(1,1)$ tensor field $\gamma$, which is selfadjoint and uniformly elliptic with respect to $g$, i.e. for some $\kappa >1$ and for all tangent vectors $v$ and $w$:
\begin{equation}
    g(\gamma(v),w) = g(v, \gamma (w)) \quad \textrm{and}\quad  \kappa^{-1} \,\Vert v \Vert_g^2 \leq g(\gamma (v), v) \leq \kappa \, \Vert v \Vert_g^2; \label{selfadjoint}
\end{equation}
$\,\Vert v \Vert_g$ denotes the metric induced norm on the tangent space.

On the boundary $\partial M$ we prescribe an electrostatic potential $f$ that generates an interior voltage potential $u.$ In the absence of interior sinks and sources, the potential $u$ is characterized as the unique solution to the boundary value problem 
\begin{align} \label{eqPDE}
    \Bigg\{\begin{split}
    \divg{\gamma \, \nabla_g{u}} &=  0  \text{ in } M,\\
     u &=  f \text{ on }\partial M.
    \end{split}
\end{align}
Existence and uniqueness of solutions to such an elliptic PDE on a Riemannian manifold is classical, see e.g. \cite{Salo2013}. The interior current field is $\gamma \, \nabla_g{u},$ i.e.\ $\gamma$ is the tensor turning the electric field $\nabla_g{u}$ into the current field. The conductivity tensor $\gamma$ can be visualized on $M$ by its corresponding ellipse field. At each point on $M$, the $(1,1)$-tensor $\gamma$ gives rise to an ellipse by its corresponding action as a linear map on the set of unit vectors in the tangent plane at the point; such an ellipse field is illustrated in figure \ref{fig:CondEllipseIntro}.


\begin{figure}[h!]
    \centering
    \begin{minipage}[t]{\textwidth}
        \begin{minipage}[t]{\textwidth}
        \centering
        \includegraphics[width=0.7\linewidth]{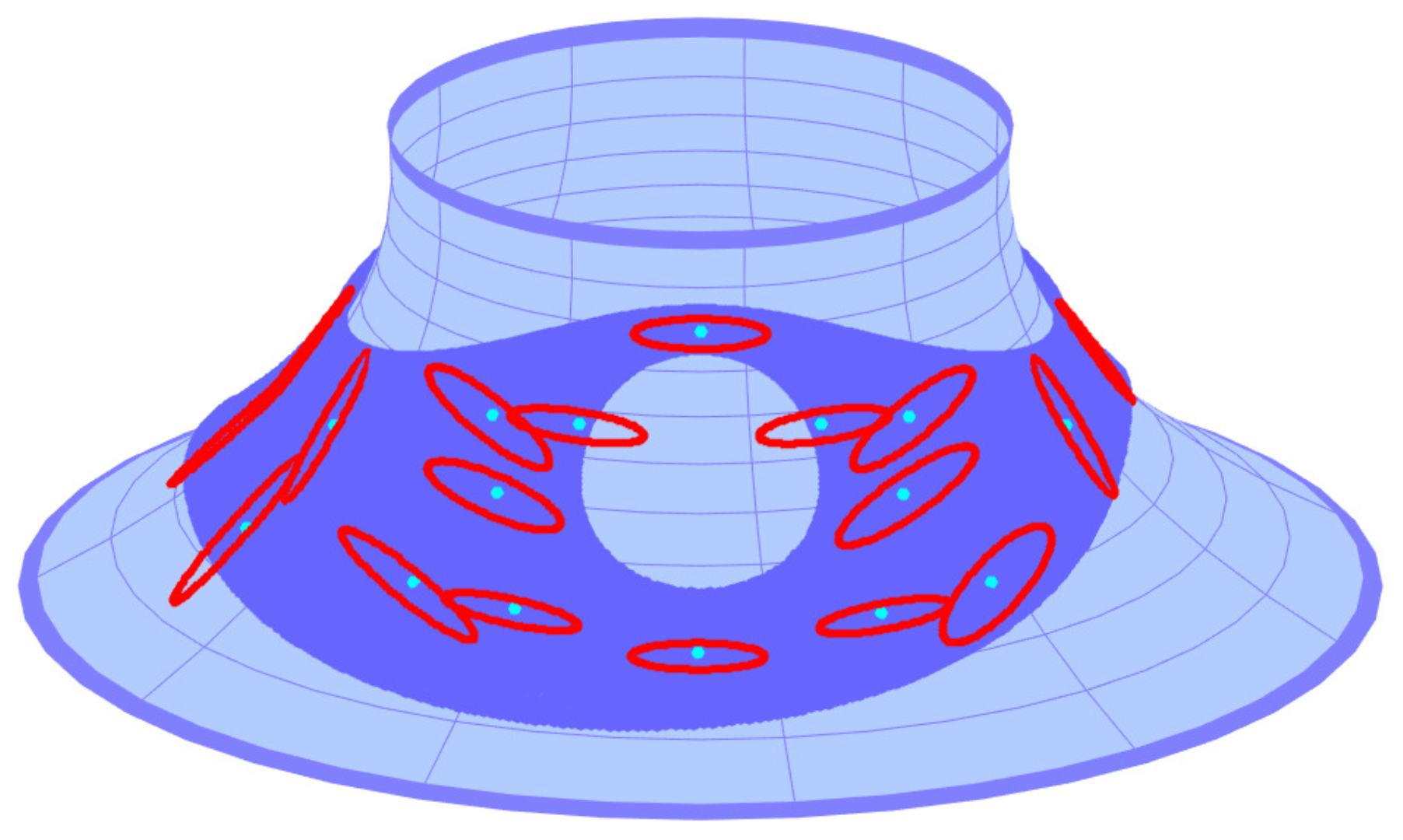}
    \end{minipage}
            \end{minipage}
    \caption{Example of a $\gamma$ induced ellipse field on a catenoid.}
    \label{fig:CondEllipseIntro}
    \end{figure}

By considering $m$ different boundary functions $f=f_{i},\; 1\leq i \leq m,$  the corresponding solutions to equation \eqref{eqPDE} are denoted by $u_{i}$. They define the so-called power density $(m \times m)$-matrix $H$ with elements:
\begin{equation}
    H_{ij}= H_{ji} = g(\gamma \,\nabla_g{u_i},\nabla_g{u_j}) \quad \text{for}\quad  1 \leq i,j \leq m. \label{Heq}
\end{equation}

The inverse problem in Acousto-Electric Tomography \cite{Zhang:Wang:2004,ammari2008a, jensen2019feasibility} is concerned with the above in a Euclidean setting and asks for the recovery of $\gamma$ from knowledge of $H.$ This problem was addressed in two dimensions in \cite{BalMonard,BalUhlmann2013,BalGuo2014,adesokan2019a} and for higher dimensions in \cite{MonardBal2013,BalBonnetier2013,Monard2018}. It is highly related to the Calder\'on problem \cite{Calderon1980}, see \cite{Uhlmann2013} for an account on this and related problems.

We propose the following:
\begin{conjecture} \label{Conj:General}
Suppose we know the $2$-dimensional compact Riemannian manifold $(M,g)$ as well as the power density matrix $H$ associated with a finite number of sufficiently well-chosen boundary functions $f_i.$ Then the conductivity tensor field $\gamma$ can be uniquely and constructively determined.
\end{conjecture}

As a first step towards this general conjecture, we prove it for all cases where the topology of the manifold is simplest possible in the following sense:

Every 2-dimensional, compact, orientable Riemannian manifold with $k$ boundary components is diffeomorphic to a handle-body, i.e. it consists of a number of handles attached to a 2-sphere with $k$ disks removed. The number of handles $p$ is called the \emph{genus} of the manifold. It is related to the Euler characteristic $\chi(M)$ and $k$ as follows: $p = 1 - \left( \chi + k \right)/2$, see \cite{hirsch1976a}.

With these preliminaries, we can now state our main result. 
\begin{theorem} \label{MainThm}
Let $(M,g)$ denote a given compact $2$-dimensional Riemannian manifold with genus $p = 0$, metric $g$ and non-empty smooth boundary $\partial M$ consisting of a finite number of boundary components. Suppose that $\gamma$ is a conductivity tensor field on $M$, which is known on the boundary $\partial M$. Then there exist $m=4$ boundary functions $f_{i}$ with induced corresponding power density matrix $H,$ such that $H$ determines $\gamma$ uniquely and constructively on all of $M.$
\end{theorem}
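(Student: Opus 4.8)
The plan is to turn \eqref{eqPDE}--\eqref{Heq} into a pointwise reconstruction of $\gamma$ that is driven by a first-order integration from the boundary inward. Since each current field $J_i:=\gamma\,\gr{u_i}$ satisfies $\divg{J_i}=0$, in two dimensions its metric dual is co-closed, so locally $J_i^\flat=\star\,d\psi_i$ for a stream function $\psi_i$; equivalently $J_i$ is $\gr{\psi_i}$ rotated by $\pi/2$. Writing the gradient components in a positively oriented $g$-orthonormal frame as the rows of a matrix $D$, and the matrix of $\gamma$ in that frame as $\Gamma=\Gamma^{\mathsf T}>0$, relation \eqref{Heq} becomes $H=D\,\Gamma\,D^{\mathsf T}$, and a short computation gives $H_{ij}\,dV_g=d\psi_i\wedge du_j$ (up to sign). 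Hence at any point where two of the gradients, say $\gr{u_1}$ and $\gr{u_2}$, are linearly independent, $\gamma$ is determined pointwise and explicitly as the unique endomorphism sending $\gr{u_k}$ to the $\pi/2$-rotation of $\gr{\psi_k}$ for $k=1,2$. It therefore suffices to (i) guarantee such a frame everywhere and (ii) reconstruct the exact $1$-forms $du_i$ and $d\psi_i$ from $H$.

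For step (ii), note that $H=D\,\Gamma\,D^{\mathsf T}$ is invariant under the gauge $D\mapsto DO$, $\Gamma\mapsto O^{\mathsf T}\Gamma O$ with $O\in SO(2)$, so $H$ alone fixes $\gamma$ only up to a pointwise rotation: the remaining unknown is a single angle field. I would close an equation for this angle by differentiating $H=D\,\Gamma\,D^{\mathsf T}$ and imposing the structural constraints that the $\gr{u_i}$ are genuine gradients ($d\,du_i=0$) and the currents are divergence free ($d\,{\star}\,J_i^\flat=0$). Together with the known metric these yield a determined first-order transport system for the angle, with coefficients computed from $H$ and $dH$; integrating it from $\partial M$, where $\gamma$ and the traces $f_i$ are known and hence the angle is known, recovers the gradients, the stream functions, and then $\gamma$ at every point reachable by the flow. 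Uniqueness of solutions of the transport system gives the uniqueness claim, and the explicit integration gives constructivity.

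For step (i) I would choose the $m=4$ boundary functions as two ``coordinate'' pairs and argue that at every interior point at least one pair has independent gradients. In two dimensions solutions of \eqref{eqPDE} are quasiconformal, so the Jacobian of a pair cannot vanish on an open set and its zeros are governed by an index/degree count fixed by the boundary data; with $\partial M$ prescribed so that each pair maps the boundary monotonically, a Rad\'o--Kneser--Choquet / Alessandrini--Nesi type argument forces the critical sets to be small and, on choosing the second pair to avoid the first, empty in union. Here the genus-zero hypothesis is decisive: it keeps $\chi(M)$ under control so the degree argument closes, and it lets one transplant the problem to a planar domain with $k-1$ holes via a conformal map, under which, by the conformal invariance of $\divg{\gamma\,\gr{\cdot}}$ in dimension two, $\gamma$ is unchanged and $H$ merely rescales by the known conformal factor, so the Euclidean results of \cite{BalMonard,BalUhlmann2013,BalGuo2014} become available.

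I expect the main obstacle to be exactly step (i) together with the global single-valuedness of the stream functions $\psi_i$. When $\partial M$ has $k\ge 2$ components, $M$ is not simply connected, the co-closed forms $\star J_i^\flat$ carry $k-1$ potentially nonzero periods (the net fluxes through the holes), and one must show these can be controlled by the choice of the four boundary functions so that single-valued $\psi_i$ and a globally non-degenerate frame coexist. Once this topological bookkeeping is settled---and it is precisely the restriction to genus $0$ that makes it work---the pointwise algebra and the boundary-to-interior transport of steps (i)--(ii) are routine and explicit.
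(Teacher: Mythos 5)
Your proposal contains the paper's entire proof, but almost as an aside: the decisive step is exactly the one you mention at the end of your step (i), namely that genus zero lets one uniformize $(M,g)$ onto a planar domain $N$ with $g_N=\rho^2 g_E$ and no extra (identified) boundary components, that in dimension two the operator $u\mapsto\Div_{g_N}(\gamma\,\gra_{g_N}u)$ equals $\rho^{-2}\,\Div_{g_E}(\gamma\,\gra_{g_E}u)$ so the solution set of \eqref{eqPDE} and the tensor $\gamma$ transplant unchanged, that $H^N=\rho^{-2}H^E$ with $\rho$ known, and that the Euclidean theorem of Monard and Bal \cite{BalMonard} then applies verbatim. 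Had you led with this and stopped, your argument would coincide with the paper's and be complete.

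Instead you treat that reduction as auxiliary and attempt to re-derive the Euclidean reconstruction, and that sketch has genuine gaps. First, your step (i) asks only that at every point one of the two pairs of gradients be independent; this is condition \eqref{cond11}, but you omit the second, equally essential hypothesis \eqref{cond12}, that $\gra_N\log\bigl(\det(\gra_N u_1,\gra_N u_2)/\det(\gra_N u_3,\gra_N u_4)\bigr)$ never vanishes. That condition is what makes the pointwise algebraic recovery of the anisotropic part $\tilde A$ solvable (it prevents division by zero when the two copies of \eqref{eq10g} coming from the two pairs are subtracted), and it is not delivered by a Rad\'o--Kneser--Choquet or Alessandrini--Nesi degree count; moreover those arguments concern simply connected domains and do not directly control critical points on a domain with $k-1$ holes. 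Existence of suitable $f_i$ is instead taken from \cite[Lemma 2.1]{BalMonard}. Second, your ``determined first-order transport system for the angle'' conflates two steps: in the anisotropic case the angle equation \eqref{eq10g} has the unknown $\tilde A$ in its coefficients, so one must first recover $\tilde A$ algebraically from the two copies of that equation --- this is the only reason $m=4$ rather than $m=2$ is needed --- and only afterwards integrate gradient equations for $\theta$ and $\log\det A$ from a boundary point where $\gamma$, hence $\theta$, is known. Third, your worry about the periods of the stream functions $\psi_i$ on a multiply connected domain is an artifact of your formulation: the actual method never needs global stream functions, only integration of gradients of globally defined scalars, so no period obstruction arises. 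None of this threatens the theorem --- every point is settled in \cite{BalMonard} --- but as written your Euclidean argument is a plausibility sketch rather than a proof; the clean route is to prove the conformal reduction carefully and then cite the Euclidean result.
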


Our approach takes in particular advantage of the work in 2D by Monard and Bal \cite{BalMonard}. In addition, the proof makes use of the Poincar\'{e}-Koebe uniformization theorem for compact Riemann surfaces (with boundary). Indeed, $(M, g)$ admits a global conformal parametrization from a \emph{fundamental domain} in $\mathbb{R}^{2}$. 

When the genus is $p = 0,$ the fundamental domain is an open set in $\mathbb{R}^{2}$ with the entire boundary accessible for specifications of boundary functions. When $p > 0,$ however, the fundamental domain has extra boundary components that must be glued together to represent the original manifold. This is for instance the case for the torus ($p=1$). We refer to \cite{gu2002a,gu2004a,schoen1997a} for a very general and constructive approach to this uniformization procedure. 


The four boundary functions mentioned in Theorem \ref{MainThm} stem from the proof techniques of Monard and Bal \cite{BalMonard}, who formulate the conditions \eqref{cond11M}-\eqref{cond12M} below for the corresponding solutions. They show the existence of four complex geometrical optics (CGO) solutions that work, however, the CGO solutions depend on the particular unknown conductivity, and hence they are impractical for solving the inverse problem. For that reason, we use tailored polynomials instead. We will use the same approach in our numerical tests. Whether the number of four conditions is optimal or the solutions can be chosen independently from the conductivity, are open questions. 

In the case of higher genus, such CGO solutions are not known to exist, and four conditions may not suffice. Other techniques e.g. based on Runge approximation, can likely yield an upper bound; we leave this question for further studies. See \cite{alberti2022a} for recent probabilistic work regarding the choice of boundary conditions satisfying a Jacobi condition similar to \eqref{cond11M}.

\begin{remark}
We note that the manifolds mentioned in Theorem \ref{MainThm} are very general in the sense that they do not need to be realizable as surfaces in $\mathbb{R}^{3}$ (with the induced metric).
\end{remark}

The main novelty of our work is  the geometric setting and the application of the uniformization theorem, which allows for the reduction to the known, Euclidean setting via the conformal parametrization of the 2D manifold. The outline of the paper is as follows: We prove Theorem \ref{MainThm} in Section \ref{secMainNew}. In that section we review some details from the Euclidean reconstruction method and indicate how the key equations can be lifted to $(M, g)$ using explicitly the conformal factor that is induced by the conformal parametrization of $(M, g)$. In Section \ref{secChoice} we introduce other parametrizations of the manifold, and analyse how the method is affected. Section \ref{secthmBM} is devoted to the formulation of the reconstruction method, and finally, in Section \ref{secNumExample}, we illustrate the result by a numerical implementation of the reconstruction procedure on a specific compact subset of the catenoid.

\section{Reduction to the Euclidean setting}
\label{secMainNew}
The proof of Theorem \ref{MainThm} relies on the fact that the conductivity equation on a conformally parametrized manifold is directly related to the corresponding equation in the Euclidean plane. Moreover, the power density matrix transforms in a similar and straighforward way. These facts reduce the reconstruction problem on a conformally parametrized manifold  to the well-known reconstruction problem in the Euclidean plane.

We use standard coordinates $x^{1}$ and $x^{2}$ and the corresponding standard basis $\{\frac{\partial}{\partial {x^{1}}} = e_{1}\, , \,\,\frac{\partial}{\partial {x^{2}}} = e_{2}\}$ in $\mathbb{R}^{2}$.

The manifold $(M, g)$ is represented by a 
fundamental domain $M\subset \mathbb{R}^{2}$ and a metric $g$ that is conformal to the Euclidean metric $g_{E}$ via a corresponding conformal factor $\rho$: 
\begin{equation}\label{eqMetricConformal}
g(e_{i}, e_{j}) = \rho^{2}\cdot g_{E}(e_{i}, e_{j}) = \rho^{2} \cdot \delta_{i}^{j} \quad 
\end{equation}
with $\delta_{i}^{j}$ denoting the Kronecker delta. 
%
%
%
%
The metric $g$ is then represented by a simple diagonal $(2 \times 2)$-matrix function $G$ with equal diagonal elements, and 
the conductivity tensor is represented by a $(2 \times 2)$-matrix function with elements $(\gamma)_i^{j}$ so that $\gamma(e_{i})(x^{1}, x^{2}) = \sum_{j}(\gamma)_i^{j}(x^{1}, x^{2})\cdot e_{j}$.

The first equation in \eqref{eqPDE} is called the $\gamma$-Laplace equation and can be written shorthand as $\Delta_g^{\gamma} u = 0$. The following observation clarifies how the equation change under a conformal change of the metric:


\begin{proposition} \label{propConformal}
Let $u(x^{1}, x^{2})$ denote a smooth function on $M$. Then
\begin{equation} \label{eqDelta}
\Delta_{g}^{\gamma}u(x^{1}, x^{2}) = \frac{1}{\rho^{2}(x^{1}, x^{2})} \cdot \Delta_{g_{E}}^{\gamma}u(x^{1}, x^{2}).
\end{equation}
\end{proposition}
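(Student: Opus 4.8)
The plan is to prove the identity by a direct computation in the standard coordinates $(x^1,x^2)$, exploiting the transformation rules for the gradient and the divergence under the conformal change $g_N = \rho^2 g_E$, together with the fact that the dimension is exactly two. First I would record how the gradient transforms. Since the metric matrix is $G_{ij} = \rho^2\delta_{ij}$, its inverse is $G^{ij} = \rho^{-2}\delta^{ij}$, and because the gradient raises the index of $du$ with the inverse metric, one obtains $\gra_{g_N}(u) = \rho^{-2}\,\gra_{g_E}(u)$.

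Next I would observe that the conductivity enters as the very same $(1,1)$-tensor $\gamma$ in both representations, since applying a $(1,1)$-tensor to a vector field is a purely linear operation that does not involve the metric; hence $\gamma\,\gra_{g_N}(u) = \rho^{-2}\,\bigl(\gamma\,\gra_{g_E}(u)\bigr)$. I would also note in passing that self-adjointness of $\gamma$ with respect to $g_N$ is equivalent to self-adjointness with respect to $g_E$, because the common factor $\rho^2$ cancels on both sides of \eqref{selfadjoint}; this is not needed for the computation but confirms that the setup is consistent across the two pictures.

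The third ingredient is the divergence. Writing a vector field $X$ in the standard basis with components $X^i$, the Riemannian divergence is $\Div_{g_N}(X) = \frac{1}{\sqrt{\Det G}}\,\partial_i\bigl(\sqrt{\Det G}\,X^i\bigr)$, and in two dimensions $\sqrt{\Det G} = \rho^2$, so $\Div_{g_N}(X) = \rho^{-2}\,\partial_i(\rho^2 X^i)$. Setting $W = \gamma\,\gra_{g_E}(u)$ and combining the three steps, the current field is $\gamma\,\gra_{g_N}(u) = \rho^{-2} W$, whence $\Delta_{g_N}^{\gamma}(u) = \Div_{g_N}(\rho^{-2}W) = \rho^{-2}\,\partial_i(\rho^2\cdot\rho^{-2}W^i) = \rho^{-2}\,\partial_i(W^i) = \rho^{-2}\,\Div_{g_E}(W)$, which is exactly $\rho^{-2}\,\Delta_{g_E}^{\gamma}(u)$.

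The computation is short and presents no serious obstacle; the one point that must be handled with care, and which is the whole reason the statement takes this clean form, is the exact cancellation of the conformal weights. The factor $\rho^{-2}$ coming from the inverse metric in the gradient meets the factor $\sqrt{\Det G} = \rho^{n}$ inside the divergence, and only when $n = 2$ do these combine so that the inner weight disappears entirely, leaving the bare Euclidean divergence. In a general dimension $n$ one would instead be left with $\rho^{-n}\partial_i(\rho^{n-2}W^i)$, which is not a pure conformal rescaling of $\Delta_{g_E}^{\gamma}$. The essential thing to verify is therefore precisely that the two-dimensional bookkeeping of the powers of $\rho$ works out as claimed.
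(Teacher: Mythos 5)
Your proof is correct and follows essentially the same route as the paper: the same gradient and divergence transformation formulas under $g_N=\rho^2 g_E$, the same observation that $\gamma$ acts linearly, and the same cancellation $\rho^{-2}\partial_i(\rho^2\cdot\rho^{-2}W^i)=\rho^{-2}\partial_i(W^i)$ that is special to dimension two. Your closing remark about the general-dimension failure matches the paper's own remark following the proposition.
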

\begin{proof}
The $g$-gradient of the function $u$ is expressed using the elements $G^{ij}$ of the inverse matrix $G^{-1}$ as follows:
\begin{equation*}
\nabla_g u = \sum_{i\, j}\frac{\partial u}{\partial x^{i}}\cdot G^{j\,i} \cdot e_{j} = \frac{1}{\rho^{2}}\cdot \sum_{i}\frac{\partial u}{\partial x^{i}}\cdot e_{i}  = \frac{1}{\rho^{2}} \cdot \nabla_{g_{E}} u,
\end{equation*}
and the
$g$-divergence of a vector field $V = \sum_{i} v^{i}\cdot e_{i}$ is in $2$D:
\begin{equation*} \label{3:eqShortDiv}
\Div_g(V) =  \frac{1}{\sqrt{\Det(G)}}\cdot \sum_{i}\frac{\partial}{\partial x^{i}}\left(v^{i}\cdot \sqrt{\Det(G)}\right) =  \frac{1}{\rho^{2}}\cdot \sum_{i}\frac{\partial}{\partial x^{i}}\left(v^{i}\cdot \rho^{2}\right).
\end{equation*}
Insertion of $V = \gamma(\nabla_g(u)) = \frac{1}{\rho^{2}} \cdot \gamma(\nabla_{g_{E}} u)$ now gives directly:
\begin{equation*} \label{eqDiv2}
\Div_g(\gamma(\nabla_g u)) = \frac{1}{\rho^{2}} \cdot \Div_{g_{E}}(\gamma(\nabla_{g_{E}}u)),
\end{equation*}
and the proposition follows.
\end{proof}

This proposition implies that the solution to the boundary value problem \eqref{eqPDE} is the same as the solution to the problem in the Euclidean plane:
\begin{align} \label{eqPDENE}
    \Bigg\{\begin{split}
    \Delta_{g_E}^{\gamma} u &=  0  \text{ in } M,\\
     u &=  f \text{ on }\partial M.
    \end{split}
\end{align}
In the following, let $H^E$ denote the power density matrix defined in terms of solutions to \eqref{eqPDENE}. The second observation concerns the change in the power density matrix under a conformal change of the metric:
\begin{proposition} \label{propConformal2}
The power density matrix for the conformal metric $g$ satisfies
\begin{equation} \label{eqHconform}
H_{ij} = \frac{1}{\rho^{2}}\cdot H^{E}_{ij} .
\end{equation}
\end{proposition}
\begin{proof}
    Since the solution to \eqref{eqPDE} and \eqref{eqPDENE} are the same, the result follows directly by calculation:
    \begin{equation*}
\begin{aligned}
H_{ij} = H_{ji} &= g(\gamma \nabla_{g} u_{i}, \nabla_{g} u_{j}) \\ &= \frac{1}{\rho^{2}}\cdot g_{E}(\gamma \nabla_{g_E} u_{i}, \nabla_{g_E} u_{j}) \\
&=
\frac{1}{\rho^{2}}\cdot H^{E}_{ij} \quad , \quad \textrm{for} \quad 1 \leq i\,, j \, \leq m.
\end{aligned}
\end{equation*}
\end{proof}
The proof of Theorem \ref{MainThm} can now be completed:
\begin{proof}[Proof of Theorem \ref{MainThm}]
    The purely Euclidean problem of reconstructing $\gamma$ from $H^E$ was considered by Monard and Bal \cite[Theorem 2.2]{BalMonard}. Indeed, they show the existence of $m=4$ conditions $f_i$ that make their method valid.
    
    For the geometric problem, we use the same four boundary conditions for \eqref{eqPDE} to obtain first the data $H;$ then we calculate $H^E$ by \eqref{eqHconform}, and finally $\gamma$ by the mentioned reconstruction algorithm.
\end{proof}


\begin{remark}
The identity \eqref{eqDelta} only holds in dimension $2$ where we can use the fact that $\sqrt{\Det(G)} = \rho^{2}$ -- as is already well-known from the case of an isotropic conductivity $(\gamma)_{i}^{j}(x^1, x^{2}) = q(x^{1}, x^{2})\cdot\delta_{i}^{j}$ for a positive function $q$.
\end{remark}

We make a final observation regarding the conditions for the four boundary potential functions and the corresponding solutions to \eqref{eqPDENE}. According to Monard and Bal \cite{BalMonard}, the boundary functions must be chosen such that the interior solutions satisfy 
    \begin{align}
        \min(\text{det}(\nabla_{g_E} u_{1},\nabla_{g_E} u_{2}), \text{det}(\nabla_{g_E} u_{3} ,\nabla_{g_E} u_{4}))\geq c_E >0  \,\, \text{for every }x \in M,\label{cond11E}\\
        \nabla_{g_E} \left(\log \left( \frac{\text{det}(\nabla_{g_E} u_{1},\nabla_{g_E} u_{2})}{ \text{det}(\nabla_{g_E} u_{3},\nabla_{g_E} u_{4})}\right)\right) \neq 0 \,\, \text{for every }x \in M. \label{cond12E}
    \end{align}
The argument for the existence of such four boundary conditions relies on complex geometrical optics solutions. However, in practical computations these solutions are not known on $\partial M,$ and therefore other families of boundary functions, e.g., polynomials, are used. We will return to this in Section \ref{secNumExample}.

The geometric counterparts of \eqref{cond11E}--\eqref{cond12E} now read as follows:
\begin{proposition}\label{prop:RelatingConditions}
    The solutions $u_j,\; j=1,\ldots,4,$ satisfy \eqref{cond11E}--\eqref{cond12E} if and only if
    \begin{align}
        \min(\mathrm{det}(\nabla_{g} u_{1},\nabla_{g} u_{2}), \mathrm{det}(\nabla_{g} u_{3} ,\nabla_{g} u_{4}))\geq c = \rho^{-4} c_E >0,  \,\, x \in M,\label{cond11M}\\
        \nabla_{g} \left(\log \left( \frac{\mathrm{det}(\nabla_{g} u_{1},\nabla_{g} u_{2})}{ \mathrm{det}(\nabla_{g} u_{3},\nabla_{g} u_{4})}\right)\right) \neq 0, \,\, x \in M. \label{cond12M}
    \end{align} 
\end{proposition}
\begin{proof}
       The left hand sides in the conditions then only differ from the left hand side of the planar conditions \eqref{cond11E}--\eqref{cond12E} with respect to the conformal factor $\rho$:
    \begin{align*}
        &\det(\nabla_{g} u_{i},\nabla_{g} u_{j})(x)= \rho^{-4}\det(\nabla_{g_E} u_{i},\nabla_{g_E} u_{j})(x), \quad (i,j)\in \{(1,2),(3,4)\},\\
        &\nabla_{g} \left(\log \left( \frac{\text{det}(\nabla_{g} u_{1},\nabla_{g} u_{2})(x)}{ \text{det}(\nabla_{g} u_{3},\nabla_{g} u_{4})(x)}\right)\right) =\rho^{-2}\nabla_{g_E} \left(\log \left( \frac{\text{det}(\nabla_{g_E} u_{1},\nabla_{g_E} u_{2})(x)}{ \text{det}(\nabla_{g_E} u_{3},\nabla_{g_E} u_{4})(x)}\right)\right).
    \end{align*}
    Hence, as $\rho$ is positive, $u_{i}$ satisfy the conditions \eqref{cond11E}-\eqref{cond12E} if and only if $u_{i}$ satisfy the conditions \eqref{cond11M}-\eqref{cond12M}.
\end{proof}

\section{Choice of coordinates for the reconstruction procedure}\label{secChoice}
By the uniformization theorem we are guaranteed existence of global conformal coordinates, but in practice it is not a straightforward  procedure to find these coordinates. The authors in \cite{gu2002a,gu2004a,schoen1997a} give a constructive approach to obtain conformal coordinates given a choice of coordinates that are non-conformal. Therefore in order to make the reconstruction procedure in this paper constructive one has to make a choice of coordinates to begin with and assume that this choice not necessarily gives the desired conformal coordinates. The non-conformal parametrization of the manifold is denoted by $(N,g_N)$ and the conformally parametrized manifold is denoted by $(M,g_M)$. In the following we want to highlight the correspondence between the PDEs and the power density matrix under the coordinate transformation from $(N,g_N)$ and $(M,g_M)$ and show how they relate to a problem in the Euclidean plane.\\

We denote the metric matrix functions for $g_M$ and $g_N$ by $G_M$ and $G_N,$ respectively. Let $\psi:  N \ni y \mapsto \psi(y)=x \in M$  be the conformal diffeomorphism from $(N,g_N)$ to $(M,g_M).$ Then as $\psi$ is conformal, the pushforward of $G_N$ by $\psi$ satisfies:
\begin{equation*}
    G_M(x) = \psi_* G_N(x) = (D\psi^{-1})^t(x) G_N(\psi^{-1}(x)) D\psi^{-1}(x)=\rho^2(x) \cdot G_E,
\end{equation*}
where $(D\psi^{-1})^t(x)$ and $D\psi^{-1}(x)$ denote the transpose of the Jacobi matrix and the Jacobi matrix with respect to $\psi^{-1}$. The boundary value problem on $(N,g_N)$ is on the form
\begin{align} \label{eqPDEM}
    \Bigg\{\begin{split}
    \Delta_{g_N}^{\gamma_N} u_N(y) &=  0  \text{ in } N,\\
     u_N(y) &=  f_N(y) \text{ on }\partial N,
    \end{split}
\end{align}
This can be expressed in the coordinates $x=\psi(y)$ on $(M,g_M)$ as follows:
\begin{align} \label{eqPDEN}
    \Bigg\{\begin{split}
    \Delta_{g_M}^{\psi_* \gamma_N} u_M(x) &=  0  \text{ in } M,\\
     u_M(x) &=  f_M(x) \text{ on }\partial M,
    \end{split}
\end{align}
where $u_M(x)=u_N(\psi^{-1}(x))$, $f_M(x)=f_N(\psi^{-1}(x))$ and $\psi^* \gamma_N(x)$ is the pushforward by $\psi$ of the conductivity tensor $\gamma_N(y)$. The latter is defined as
\begin{equation*}
    \gamma_M(x) =\psi_* \gamma_N(x)= D \psi(\psi^{-1}(x)) \gamma_N(\psi(y)) D\psi^{-1}(x).
\end{equation*}
Using the expression for the pushforward of the metric matrix function $G_N$ and the chain rule one obtains the following relationship between the gradients:
\begin{align*}
    \nabla_{g_N} u_N(y) &= G_N^{-1} \begin{bmatrix}
        \partial/\partial y^1\\
        \partial/\partial y^2
    \end{bmatrix} u_M(\psi(y))\\ &= D\psi^{-1} G_M^{-1} (D\psi^{-1})^t (D\psi)^t \begin{bmatrix}
        \partial/\partial x^1\\
        \partial/\partial x^2
    \end{bmatrix} u_M(x)\\ &= D\psi^{-1} \nabla_{g_M} u_M(x).
\end{align*}

We note that for the reconstruction procedure in the plane it is essential that the power densities satisfy conditions \eqref{cond11E}-\eqref{cond12E}. The corresponding conditions are defined as follows on $(N,g_N)$:
\begin{align}
   \min(\text{det}(\nabla_{g_N} u_{N,1},\nabla_{g_N} u_{N,2}), \text{det}(\nabla_{g_N} u_{N,3} ,\nabla_{g_N} u_{N,4}))\geq c_N >0  \,\, \text{for every }y \in N, \label{cond11N1}\\
    \nabla_{g_N} \left(\log \left( \frac{\text{det}(\nabla_{g_N} u_{N,1},\nabla_{g_N} u_{N,2})}{ \text{det}(\nabla_{g_N} u_{N,3},\nabla_{g_N} u_{N,4})}\right)\right) \neq 0 \,\, \text{for every }y \in N. \label{cond12N1}
\end{align}
and they are on the form \eqref{cond11M}-\eqref{cond12M} on $(M,g_M)$.

 They follow the following relationship under the coordinate transformation:
     \begin{align}
        &\det(\nabla_{g_N} u_{N,i},\nabla_{g_N} u_{N,j})(y) \nonumber\\
        &\qquad= J_{\psi}^{-1}(\psi(y)) \det(\nabla_{g_M} u_{M,i},\nabla_{g_M} u_{M,j})(\psi(y)), \quad (i,j)\in \{(1,2),(3,4)\}\label{relMN1}\\
        &\nabla_{g_N} \left(\log \left( \frac{\text{det}(\nabla_{g_N} u_{N,1},\nabla_{g_N} u_{N,2})(y)}{ \text{det}(\nabla_{g_N} u_{N,3},\nabla_{g_N} u_{N,4})(y)}\right)\right) \nonumber\\ 
        &\qquad =D \psi^{-1}(\psi(y)) \nabla_{g_M} \left(\log \left( \frac{\text{det}(\nabla_{g_M} u_{M,1},\nabla_{g_M} u_{M,2})(\psi(y))}{ \text{det}(\nabla_{g_M} u_{M,3},\nabla_{g_M} u_{M,4})(\psi(y))}\right)\right), \label{relMN2}
    \end{align}
where $J_{\psi}^{-1}(y)=\det(D\psi)^{-1}$ and we note that $J_{\psi}\geq c>0$ since $D \psi$ is everywhere invertible. Combining these results yields the following proposition that links the conditions \eqref{cond11N1}-\eqref{cond12N1} and \eqref{cond11E}-\eqref{cond12E}:

\begin{proposition}
    The solutions $u_{M,j},\; j=1,\ldots,4,$ satisfy \eqref{cond11E}--\eqref{cond12E} if and only if the solutions $u_{j,N},\; j=1,\ldots,4,$ satisfy 
    \begin{align}
        &\min(\mathrm{det}(\nabla_{g_N} u_{N,1},\nabla_{g_N} u_{N,2})(y), \mathrm{det}(\nabla_{g_N} u_{N,3} ,\nabla_{g_N} u_{N,4})(y)) &&\nonumber\\
        &\hspace{30mm} \geq c_N = J_{\psi}^{-1}(\psi(y)) \rho^{-4}(\psi(y))  c_E >0,  & &y \in N,\label{cond11N}\\
        &\nabla_{g_N} \left(\log \left( \frac{\mathrm{det}(\nabla_{g_N} u_{N,1},\nabla_{g_N} u_{N,2})}{ \mathrm{det}(\nabla_{g_N} u_{N,3},\nabla_{g_N} u_{N,4})}\right)\right) \neq 0, & &y \in N. \label{cond12N}
    \end{align} 
\end{proposition}
\begin{proof}
       Combining proposition \ref{prop:RelatingConditions} and the observations in \eqref{relMN1} and \eqref{relMN2} yields that the left hand sides in the conditions only differ from the left hand side of the planar conditions \eqref{cond11E}--\eqref{cond12E} with respect to the conformal factor $\rho$ and the Jacobian of $\psi$:
    \begin{align*}
        &\det(\nabla_{g_N} u_{N,i},\nabla_{g_N} u_{N,j})(y)= J_{\psi}^{-1}(\psi(y))\rho^{-4}(\psi(y))\det(\nabla_{g_E} u_{M,i},\nabla_{g_E} u_{M,j})(y),  
        \intertext{for $(i,j)\in \{(1,2),(3,4)\}$, and}
        &\nabla_{g_N} \left(\log \left( \frac{\text{det}(\nabla_{g_N} u_{N,1},\nabla_{g_N} u_{N,2})(y)}{ \text{det}(\nabla_{g_N} u_{N,3},\nabla_{g_N} u_{N,4})(y)}\right)\right) \\
        &\qquad =D\phi^{-1}(\psi(y))\rho^{-2}(\psi(y))\nabla_{g_E} \left(\log \left( \frac{\text{det}(\nabla_{g_E} u_{M,1},\nabla_{g_E} u_{M,2})(\psi(y))}{ \text{det}(\nabla_{g_E} u_{M,3},\nabla_{g_E} u_{M,4})(\psi(y))}\right)\right).
    \end{align*}
    Hence, as $\rho$ is positive, $D\psi$ is invertible with positive determinant $J_{\psi}$, $u_{M,i}$ satisfy the conditions \eqref{cond11E}-\eqref{cond12E} if and only if $u_{N,i}$ satisfy the conditions \eqref{cond11N}-\eqref{cond12N}.
\end{proof}

\section{The reconstruction procedure}
\label{secthmBM}
In the first part of this section we summarize the Euclidean reconstruction procedure based on \cite{BalMonard}, see also \cite{schlueter2022a}. In the second part we use this to obtain the reconstruction procedure for conformally parameterized manifolds and then generalize the procedure to non-conformally parametrized manifolds.

\subsection{The Euclidean reconstruction procedure in $(M,g_E)$}
\label{secErec}
Based on (the unknown) $\gamma$ we introduce another smooth (1,1) tensor field $A$ defined in each tangent space as $A^2(x)=\gamma(x)$ and based on $A$ we define the vector fields $S_k = A \, \nabla_{g_E} u_{k}$ for $1\leq k \leq m$. Furthermore $A$ can be decomposed as follows
\begin{equation*}
    \Tilde{A}=\text{det}(A)^{-\frac{1}{2}} A \quad \text{with} \quad \text{det}(\Tilde{A})=1.
\end{equation*}
For the reconstruction procedure $S=(S_1,S_2)$ is orthonormalized into an $SO(2)$-valued frame $R=(R_1,R_2)$, by finding $T$ such that $R=S T^T$. The transfer matrix $T$ gives rise to the four vector fields $V_{bc}$ and thence $V_{bc}^{a}$:
\begin{equation*}
    V_{bc}=\sum_{k=1}^{2}\nabla_{g_E} (T_{bk}) T^{kc}, \quad 1 \leq b,c \leq 2, \quad V_{bc}^{a} = \frac{1}{2}(V_{bc}-V_{cb}),
\end{equation*}
where $T_{bk}$ and $T^{kc}$ denote the entries in $T$ and $T^{-1}$ respectively. As $R$ is a rotation matrix, it can be parameterized by a function $\theta$ as $R=\left(\begin{smallmatrix}\cos \theta & -\sin \theta\\
\sin \theta & \cos \theta\end{smallmatrix}\right)$. For the reconstruction procedure to work, we need the solutions $u_i$ that determine the power density matrix $H^E$ to satisfy the conditions \eqref{cond11E}-\eqref{cond12E}. Both conditions are directly motivated by the reconstruction formulas: The first condition guarantees invertibility of the submatrices $H^{E,(1)}$ and $H^{E,(2)}$ which are defined by:
\begin{equation*}
    H^{E,(1)}=\begin{bmatrix}
        H_{11}^E & H_{12}^E\\ H_{12}^E & H_{22}^E
    \end{bmatrix}, \quad \text{and} \quad H^{E,(2)}=\begin{bmatrix}
        H_{33}^E & H_{34}^E\\ H_{34}^E & H_{44}^E
    \end{bmatrix}.
\end{equation*}
The second condition ensures that a linear system of equations on the form $\Tilde{A}^2 X_f = Y_f$ never has the zero solution $X_f=0$ (this system of equations is spelled out explicitly in \eqref{xfyf}) and this condition is discussed further below.
The reconstruction procedure is then based on two equations that are corresponding to one pair of solutions $(u_i,u_j)$ to \eqref{eqPDENE} with $(i,j)=(1,2)$ or $(i,j)=(3,4)$ respectively. For simplicity we state the equations \eqref{eq8g} and \eqref{eq10g} corresponding to the pair $(u_1,u_2)$. Using  that $\Div_{g_E} (J A^{-1}S_k)$ vanishes on $M$ for $k=1,2$, and with $J$ defined as $J=\left(\begin{smallmatrix}0 & -1\\
1 & 0\end{smallmatrix}\right)$, one can derive the first equation:
\begin{equation}\label{eq8g}
    \nabla_{g_E} \log(\text{det} \,A)=D + \lp\nabla_{g_E} \lp H^{E,(1)}\rp^{qp} \cdot \Tilde{A}\, S_p\rp \Tilde{A}^{-1} \, S_q,
\end{equation}
with $D=\frac{1}{2}\nabla_{g_E} \log \lp \text{det}H^{E,(1)} \rp$ and where $\lp H^{E,(1)}\rp^{qp}$ denotes entries in $\lp H^{E,(1)}\rp^{-1}$. By writing the Lie bracket $[\Tilde{A}R_2, \Tilde{A}R_1]$ in two different ways one can obtain the second equation: 
\begin{equation}\label{eq10g}
    \tilde{A}^2 \nabla_{g_E} \theta + [\tilde{A}_2,\tilde{A}_1] =   \tilde{A}^2 V_{12}^{a} - \frac{1}{2}J D.
\end{equation}
Here $[\tilde{A}_2,\tilde{A}_1]$ denotes the Lie bracket between columns of $\tilde{A}$. The main challenge in the reconstruction procedure is to reconstruct $\Tilde{A}$ from equation \eqref{eq10g}, as the equation both depends on $\Tilde{A}$ and the unknown function $\theta$. For this purpose one needs two pairs of boundary conditions that both give rise to solutions that satisfy equation \eqref{eq10g}:
\begin{equation}
    \widetilde{A}^2 \nabla_{g_E} \theta_{k} + [\widetilde{A}_2, \widetilde{A}_1] = \widetilde{A}^2 V_{12}^{a(k)} - \frac{1}{2}J D^{(k)}.\label{eq102}
\end{equation}
Here $(k)$ in $\theta_k$, $V_{12}^{a(k)}$ and $D^{(k)}$ indicates the respective pair of solutions that gives rise to these functions and vector fields for $k=1,2$. Subtracting equation \eqref{eq102} with $k=1$ from the same equation with $k=2$ yields
\begin{equation}
    \Tilde{A}^2 \left( \nabla_{g_E}(\theta_2-\theta_1) - V_{12}^{a(2)} + V_{12}^{a(1)}\right)=-\frac{1}{2}J (D^{(2)}-D^{(1)}).\label{eq10sub}
\end{equation}
This is an algebraic system of equations on the form
\begin{equation*}
    \Tilde{A}^2 X_f = Y_f,
\end{equation*}
with
\begin{equation}\label{xfyf}
    X_f= \nabla_{g_E}(\theta_2-\theta_1) - V_{12}^{a(2)} + V_{12}^{a(1)}, \quad \text{and} \quad Y_f = -\frac{1}{2}J (D^{(2)}-D^{(1)}).
\end{equation}
Note that it is possible to express the functions $\cos(\theta_2-\theta_1)$ and $\sin(\theta_2-\theta_1)$, and hence also $\nabla_{g_E}(\theta_2-\theta_1)$ appearing in the term $X_f$ by the data, when taking inner products between columns of the two $R$-matrices $R^{(1)}$ and $R^{(2)}$:
\begin{align}
\begin{aligned}
        \cos(\theta_2-\theta_1)&=R_1^{(1)} \cdot R_1^{(2)} = \sum_{i,j=1}^2 T_{1i}^{(1)} T_{1j}^{(2)} H_{i(2+j)}^E\\
        \sin(\theta_2-\theta_1)&=R_2^{(1)} \cdot R_1^{(2)} = \sum_{i,j=1}^2 T_{2i}^{(1)} T_{1j}^{(2)} H_{i(2+j)}^E.\\
        \end{aligned}\label{BalMonardtrick}
\end{align}
By the chain rule it follows that
\begin{equation*}
    \nabla_{g_E} (\theta_2-\theta_1) = \cos(\theta_2-\theta_1) \nabla_{g_E} \sin(\theta_2-\theta_1) - \sin(\theta_2-\theta_1) \nabla_{g_E} \cos(\theta_2-\theta_1)
\end{equation*}
so the gradient $\nabla_{g_E} (\theta_2-\theta_1)$ is solely determined by entries of the $T^{(i)}$-matrices and the $4\times 4$ matrix $H^E$ that contains the power density data.

\subsubsection{Equations for the reconstruction procedure}
For the first step in the reconstruction procedure one reconstructs $\widetilde{A}$ from equation \eqref{eq10sub} and therefore needs data corresponding to $m=4$ boundary conditions. By the above analysis all quantities apart from $\widetilde{A}$ depend solely on the data. Therefore, solving this equation for $\widetilde{A}$ boils down to solving a linear system of equations.\par
For the second step in the reconstruction procedure one reconstructs the angle $\theta$ to be able to determine the vector fields $S_i=A \nabla_{g_E} u_{i}$ from the entries of $H_{ij}^E=\gamma \nabla_{g_E} u_{i} \cdot \nabla_{g_E} u_{j}$. For this one needs data corresponding to $m=2$ measurements. $\theta$ is reconstructed by solving the following gradient equation which is deduced from equation \eqref{eq10g}:
\begin{equation}\label{eq:thetagrad}
    \nabla_{g_E} \theta = F,
\end{equation}
with 
\begin{equation*}
    F = V_{12}^{a} - \widetilde{A}^{-2}\lp \frac{1}{2}J D + [\widetilde{A}_2,\widetilde{A}_1]\rp.
\end{equation*}
Once $\theta$ is known at at least one point on the boundary one can integrate $F$ along curves originating from that point to obtain $\theta$ throughout the whole domain. Alternatively, when assuming that $\theta$ is known along the whole boundary one can apply the divergence operator to \eqref{eq:thetagrad} and solve the following Poisson equation with Dirichlet boundary condition:
\begin{equation}\label{eq:thetaPois}
    \begin{cases}
     \Delta_{g_E} \theta = \nabla_{g_E} \cdot F & \text{in }M,\\
     \theta = \theta_{\text{true}} & \text{on }\partial M.
    \end{cases}
\end{equation}
For the third step in the reconstruction procedure one reconstructs the determinant of $A$ from equation \eqref{eq8g} requiring data from $m=2$ measurements. Equation \eqref{eq8g} can be simplified further to be on the following form:
\begin{equation*}
    \nabla_{g_E} \log(\det A) = I,
\end{equation*}
with 
\begin{align*}
    I &= \cos(2\theta) K + \sin(2\theta) J K,\\
    K &= U \tilde{A} (V_{11} - V_{22}) + J U \tilde{A} (V_{12}+V_{21}), \quad \text{and} \quad U = \begin{bmatrix} 1 & 0\\ 0 & -1 \end{bmatrix}.
\end{align*}
Similarly as for $\theta$, one needs to solve a gradient equation to obtain $\det(A)$ and has the possibility of either integrating along curves or solving a Poisson equation, assuming knowledge of $\gamma$ in one point or along the whole boundary respectively. We assume knowledge of $\gamma$ along the whole boundary and solve the following Poisson problem with Dirichlet condition:
\begin{equation}\label{eq:APois}
    \begin{cases}
     \Delta_{g_E} \log (\det A) = \nabla_{g_E} \cdot I & \text{in }M,\\
     \log(\det A) = \log(\det A_{\text{true}}) & \text{on }\partial M.
    \end{cases}
\end{equation}
These steps yield the reconstruction procedure outlined in Algorithm \ref{algo:Euclidean}.

\begin{algorithm}[H]
Ensure that $H^E$ satisfies \eqref{cond11E} and \eqref{cond12E}.
\begin{enumerate}
    \item Reconstruct $\Tilde{A}$ by solving equation \eqref{eq10sub} and using data from the full power density matrix $H^E$
    \item Reconstruct $\theta$ by solving the boundary value problem \eqref{eq:thetaPois} and using data from the submatrix $H^{E,(1)}$
    \item Reconstruct $(\det A)$ by solving the boundary value problem \eqref{eq:APois} and using data from the submatrix $H^{E,(1)}$
\end{enumerate}
 \caption{Euclidean reconstruction procedure in $(M,g_E)$}\label{algo:Euclidean}
\end{algorithm}\textbf{}

\subsubsection{Choice of the transfer matrix $T$ and knowledge about $\theta$ used for numerical experiments}
In the following we describe the transfer matrix $T$ determined by $R=ST^T$ corresponding one pair of solutions $(u_i,u_j)$ where $(i,j)=(1,2)$ or $(i,j)=(3,4)$. For simplicity we state the results corresponding to the pair $(u_1,u_2)$ with corresponding submatrix $H^{E,(1)}$. The matrix $T$ with corresponding rotation matrix $R$ is uniquely defined up to a rotation. In theory, the choice of $T$ will not influence the reconstruction procedure, as every choice of $T$ with corresponding $\theta$ will work to extract the vector fields $S_k=A \nabla_{g_E} u_{k}$ from the entries of $H^{E,(1)}$. However, numerically a simple choice of $T$ can be an advantage. For this reason, we choose Gram-Schmidt orthonormalisation to obtain the following $T$, as in this case the vector fields $V_{bc}$ have the simplified form as in \eqref{eq:Vexp}:
\begin{equation}\label{eq:Tdef}    
    T^E = \begin{bmatrix} (H_{11}^E)^{-\frac{1}{2}} & 0 \\ -H_{12}^E (H_{11}^E)^{-\frac{1}{2}} d^{-1} &(H_{11}^E)^{\frac{1}{2}} d^{-1} \end{bmatrix},
\end{equation} 
with $d=(H_{11}^E H_{22}^E -(H_{12}^E)^2)^{\frac{1}{2}}$. By the Jacobian condition~\eqref{cond11E}, $H_{11}^E>0$ and thus $T$ is well-defined. For this choice of $T$ the function $\theta$ is given by the angle between $A\nabla_{g_E} u_{1}$ and the $x_1$-axis, as in this case the first column of $R$ simplifies to
\begin{equation*}
    R_1=T_{11} S_1 + T_{12} S_2 = \dfrac{A \nabla_{g_E} u_{1}}{\vert A\nabla_{g_E} u_{1}\vert_{g_E}},
\end{equation*}
so that
\begin{equation}\label{eq:thetaAng}
    \theta=\text{arg}(A \nabla_{g_E} u_{1}).
\end{equation}
In addition, the vector fields $V_{bc}$ can be written explicitly in terms of $H^{E,(1)}$:
\begin{align}\label{eq:Vexp}
\begin{aligned}
    V_{11}&= \nabla_{g_E} \log (H_{11}^E)^{-\frac{1}{2}}, & V_{12}&=0,\\ V_{21}&= -\frac{H_{11}^E}{d} \nabla_{g_E} \left( \frac{H_{12}^E}{H_{11}^E}\right), & V_{22}&=\nabla_{g_E} \log \left( \frac{(H_{11}^E)^{\frac{1}{2}}}{d}\right).
    \end{aligned}
\end{align}
By the maximum principle \cite{Gilbarg:Trudinger:2001} $u_{1}$ achieves its minimum over $M$ at $x_m$. Therefore, at this point the gradient points in the direction of highest increase of $u_{1}$  corresponding to $-\nu$, the inward normal vector (by condition \eqref{cond11E} $\nabla_{g_E} u_{1}$ can never be the zero vector; this implies $\vert A \nabla_{g_E} u_1 \vert_{g_E}>0$). Hence,
\begin{equation*}
    \frac{A\nabla_{g_E} u_{1}}{\vert A\nabla_{g_E}u_{1} \vert_{g_E}}(x_m)= -\nu(x_m) \quad \Rightarrow \quad  A \nabla_{g_E} u_{1}(x_m) = -\vert A\nabla_{g_E} u_{1} \vert_{g_E} \, \nu (x_m),
\end{equation*}
so that $\theta$ is known at $x_m$. Since the unit outward normal $\nu$ is known at the boundary, $\theta$ is solely determined by the direction of $A\nabla_{g_E} u_{1}$. Knowledge of $\theta$ along the whole boundary is related to knowledge of the Neumann data $\gamma \nabla_{g_E} u_{1} \cdot \nu$. This follows from the fact that $A \nabla_{g_E} u_{1}$ can be decomposed into two parts with contribution from the unit normal $\nu$ and the tangent vector $t=J \nu$: 
\begin{equation*}
    A \nabla_{g_E} u_{1} = (A \nabla_{g_E} u_{1} \cdot \nu)\nu + (A \nabla_{g_E} u_{1} \cdot t)t.
\end{equation*}
As $u_{1}\vert_{\partial M}$ is known by the Dirichlet condition and $\gamma$ and thus $A$ is assumed known at the boundary, complete knowledge of $\theta$ along the boundary follows when the Neumann data $\gamma \nabla_{g_E} u_{1} \cdot \nu$ is known.

\subsubsection{Simplification of the equations when using information from $m=3$ boundary functions}
In numerical simulations $m=3$ boundary functions has proven sufficient for reconstructing anisotropic conductivities. In accordance with \cite{BalMonard} we therefore use three boundary conditions in pairs $(f_1,f_2)$ and $(f_3,f_4)$ with $f_3=f_2$ for our numerical simulations. In this section we highlight how the $T^{(k)}$-matrices, $V_{bc}^{(k)}$-vector fields and the corresponding vector fields $X_f$ and $Y_f$ in \eqref{xfyf} simplify in this situation.\\\\
Note that as we only use three boundary conditions, the power density matrix $H^E$ has recurring elements and this implies that the expressions for the transfer matrices $T^{(1)}$ and $T^{(2)}$ used for the Euclidean reconstruction procedure are simplified. $T^{(1)}$ and $T^{(2)}$ correspond to the submatrices $H^{E,(1)}$ and $H^{E,(2)}$ of the Euclidean power density matrix $H^E$. Using the representation of $T$ as in \eqref{eq:Tdef} gives:
\begin{align*}
    T^{(1)}&= \begin{bmatrix}
        (H_{11}^E)^{-\frac{1}{2}} & 0\\ -H_{12}^E (H_{11}^E)^{-\frac{1}{2}} d_1^{-1} & (H_{11}^E)^{\frac{1}{2}} d_1^{-1}
    \end{bmatrix},
    \intertext{and}
    T^{(2)}&= \begin{bmatrix}
        (H_{22}^E)^{-\frac{1}{2}} & 0\\ -H_{24}^E (H_{22}^E)^{-\frac{1}{2}} d_2^{-1} & (H_{22}^E)^{\frac{1}{2}} d_2^{-1}
    \end{bmatrix},
\end{align*}
with $d_1 = (H_{11}^E H_{22}^E -(H_{12}^E)^2)^{\frac{1}{2}}$ and $d_2=(H_{22}^E H_{44}^E-(H_{24}^E)^2)^{\frac{1}{2}}$. Furthermore, the vector fields $V_{bc}^{(k)}$ are on the form (following the formulas in \eqref{eq:Vexp}):
\begin{align*}
\begin{aligned}
    V_{11}^{(1)}&= \nabla_{g_E} \log (H_{11}^E)^{-\frac{1}{2}}, & V_{11}^{(2)}&= \nabla_{g_E} \log (H_{22}^E)^{-\frac{1}{2}},\\
    V_{12}^{(1)}&=0, & V_{12}^{(2)}&=0,\\ 
    V_{21}^{(1)}&= -\frac{H_{11}^E}{d_1} \nabla_{g_E} \left( \frac{H_{12}^E}{H_{11}^E}\right), & V_{21}^{(2)}&= -\frac{H_{22}^E}{d_2} \nabla_{g_E} \left( \frac{H_{24}^E}{H_{22}^E}\right),\\
    V_{22}^{(1)}&=\nabla_{g_E} \log \left( \frac{(H_{11}^E)^{\frac{1}{2}}}{d_1}\right), & V_{22}^{(2)}&=\nabla_{g_E} \log \left( \frac{(H_{22}^E)^{\frac{1}{2}}}{d_2}\right).
    \end{aligned}
\end{align*}
Using these expressions for the $T$-matrices and the vector fields $V_{bc}$ one can arrive at the simplified forms for $X_f$ and $Y_f$ in equation \eqref{xfyf} used for reconstruction of $\tilde{\gamma}$ in the Euclidean procedure:
\begin{align*}
    X_f&=-\frac{H_{22}^E}{2} \lp \frac{1}{d_1} \nabla_{g_E} \lp \frac{H_{12}^E}{H_{22}^E}\rp + \frac{1}{d_2} \nabla_{g_E} \lp \frac{H_{24}^E}{H_{22}^E}\rp\rp,
    \intertext{and}
    Y_f&=-\frac{1}{2}J \nabla_{g_E} \lp \log d_2 - \log d_1\rp.
\end{align*}
Note that the element $H_{14}^E$ does not appear in the formula and is thus not needed for the reconstruction procedure.
\subsection{Reconstruction procedure for conformally parametrized manifolds}
The reconstruction procedure follows directly from section \ref{secMainNew} and the constructive Euclidean reconstruction procedure in Algorithm \ref{algo:Euclidean}. It is outlined in Algorithm \ref{algoConf}.

\begin{algorithm}[H]
Choose a set of boundary conditions $(f_{1},f_{2},f_{3},f_{4})$ so that $H$ satisfies \eqref{cond11M} and \eqref{cond12M}.
\begin{enumerate}
        \item Transform the power density data to the Euclidean domain $(M,g_E)$: $$H^E(x) = \rho^2 \, H(x)$$
        \item As $H$ satisfies the conditions \eqref{cond11M}-\eqref{cond12M}, the corresponding $H^E$ satisfies the conditions \eqref{cond11E}-\eqref{cond12E}. Use the Euclidean procedure in Algorithm \ref{algo:Euclidean} to reconstruct $\gamma(x)$ from $H^E(x)$ in $(M,g_E)$
\end{enumerate}
 \caption{Reconstruction procedure for conformal parametrizations}\label{algoConf}
\end{algorithm}

\subsection{Reconstruction procedure for non-conformally parametrized manifolds}
\label{secRecProcNonConf}
The reconstruction procedure follows then from the analysis in section \ref{secChoice} and the reconstruction procedure for conformally parametrized manifolds in Algorithm \ref{algoConf}. The idea for this setting is that in practice it is not straightforward to find a conformal parametrization of the manifold straight away. So in order to use a constructive approach for finding this parametrization one needs to employ \cite{gu2002a,gu2004a,schoen1997a}. This requires to make a choice of coordinates in the first place that most likely will give a non-conformal parametrization of the manifold. The procedure is then outlined in Algorithm \ref{algoM}.

\begin{algorithm}[H]
Choose a set of boundary conditions $(f_{N,1},f_{N,2},f_{N,3},f_{N,4})$ so that $H^N$ satisfies \eqref{cond11N} and \eqref{cond12N}.
\begin{enumerate}
        \item Find the conformal diffeomorphism $\psi$ from $(N,g_N)$ to $(M,g_M)$
        \item Compute $\rho$ from 
        $$(D\psi^{-1})^t(x) G_N(\psi^{-1}(x)) D\psi^{-1}(x)=\rho^2(x) \cdot G_E$$
        and express $H^N$ in the conformal coordinates: $$\quad H^M(x)=H^N(y)$$
        \item Transform the power density data to the Euclidean domain $(M,g_E)$: $$H^E(x) = \rho^2 \, H^M(x)$$
        \item As $H^N$ satisfies the conditions \eqref{cond11M}-\eqref{cond12M}, the corresponding $H^E$ satisfies the conditions \eqref{cond11E}-\eqref{cond12E}. Use the Euclidean procedure in Algorithm \ref{algo:Euclidean} to reconstruct $\gamma_M(x)$ from $H^E(x)$ in $(M,g_E)$
\end{enumerate}
 \caption{Reconstruction procedure for other parameterizations.}\label{algoM}
\end{algorithm}

\section{Representing anisotropic conductivities as ellipse fields}\label{sec:ellipses}
Since the conductivity at each point is a linear map of the tangent plane into itself, it is difficult to visualize $\gamma$ in a coordinate invariant way even on a $2$-dimensional surface. For example, if we just display the three components of the corresponding matrix valued function on the surface, then this will yield different results for different parametrizations. 

Instead, at each point $p$ on the surface we compute the corresponding action of $\gamma$ on the unit vectors in the tangent plane at $p$, i.e. we compute and display the ellipse  
\begin{equation*}
    E(p)=\left\{ \gamma(X) \in T_p M \vert \, g(X, X) =1\right\}.
\end{equation*}
    
This gives a simple visualization of the $(1,1)$ tensor field $\gamma$. Indeed, suppose $\gamma$ has the two eigenvectors $a_{1}$ and $a_{2}$ with corresponding eigenvalues $\lambda_{1}$ and $\lambda_{2}$. The eigenvalues are positive and the eigenvectors are $g$-orthogonal and all is represented by the ellipse

\begin{equation*}
 E(p)=\left\{ \lambda_{1}\cdot \frac{a_{1}}{\Vert a_{1} \Vert_{g}}\cdot \cos(\theta) + \lambda_{2}\cdot \frac{a_{2}}{\Vert a_{2} \Vert_{g}}\cdot \sin(\theta) \in T_p M \vert \, \theta \in \mathbb{S}^{1} \right\}.
\end{equation*}

\subsection{Comparing conductivities}
We need to be able to compare conductivities both pointwise and globally via a suitable measure of difference. At each point we can do that by comparing the corresponding representing ellipses:
Suppose that $\lambda_{1} \geq \lambda_{2}$ are the positive eigenvalues of $\gamma$ with corresponding $g$-orthogonal \emph{unit} eigenvectors $c_{1}$ and $c_{2}$. Suppose that $\mu_{1} \geq \mu_{2}$ are the positive eigenvalues of $\eta$ with corresponding $g$-orthogonal \emph{unit} eigenvectors $d_{1}$ and $d_{2}$.
We assume further that the two pairs of eigenvectors in the given order define the same orientation in the tangent plane.
Then there is a unique minimal rotation by some angle $\phi$ of $c_{1}$ and $c_{2}$ onto $d_{1}$ and $d_{2}$ followed by unique scalings in the $d_{1}$ and $d_{2}$ directions so that in total
$\lambda_{1}\cdot c_{1}$ and $\lambda_{2} \cdot c_{2}$ are mapped onto $\mu_{1}\cdot d_{1}$ and $\mu_{2} \cdot d_{2}$, respectively. There are now different choices of measures of the
energy of this map, i.e. different choices of invariant measures of the difference between the two conductivities $\gamma$ and $\eta$. The SVD decomposition of the total map gives immediately the two singular values
\begin{equation}
    \sigma_{1} = \frac{\mu_{1}}{\lambda_{1}}\quad , \quad   \sigma_{2} = \frac{\mu_{2}}{\lambda_{2}}\quad .
\end{equation}

One possible general measure of difference, which is also 
invariant under interchange of $\gamma$ and $\eta$ (the contribution of the singular values of the deformation are the same as the contribution from the inverse deformation), is the following:

\begin{equation}\label{ellipseDist}
m^{2}(\gamma, \eta) = \kappa \cdot \phi^{2} + \left( \frac{\lambda_{1}}{\mu_{1}}- \frac{\mu_{1}}{\lambda_{1}}\right)^2 + \left( \frac{\lambda_{2}}{\mu_{2}} - \frac{\mu_{2}}{\lambda_{2}}\right)^{2} \quad,
\end{equation}
where $\kappa$ is a constant (of choice), which determines the relative weight of the rotation part of the mapping. An alternative measure of difference between conductivities can be obtained via a distance function on the set of symmetric positive definite matrices as presented in e.g. \cite{Moakher2011}.

\section{A computational study}\label{secNumExample}
In the following  the manifold $(M,g)$ is a domain on the catenoid as shown in figure \ref{fig:cat}. We aim at reconstructing the conductivity $\gamma$ visualized by its corresponding ellipse field in figure \ref{fig:trueEllipsefield}. For the numerical reconstruction procedure we consider the following conformal parametrization $r$ of $(M,g=\rho^2 g_E)$ with the corresponding conformal factor $\rho$:
\begin{align}
    r(x^1,x^2)&=\begin{pmatrix}
    \cosh\lp\log \lp \sqrt{(x^1)^2+(x^2)^2}\rp \rp\cos(\arg(x^1+i x^2))\\
    \cosh \lp \log \lp \sqrt{(x^1)^2+(x^2)^2}\rp \rp\sin(\arg(x^1+i x^2))\\
     \log \lp \sqrt{(x^1)^2+(x^2)^2}\rp \\
    \end{pmatrix} \label{eqConfParam}\\
    \rho&=\rho(x^1,x^2) =\frac{\cosh\lp \frac{1}{2}\log((x^1)^2+(x^2)^2)\rp}{\sqrt{(x^1)^2+(x^2)^2}}. \label{eqConfFactor}
\end{align}
The corresponding parameter domain and the square of the conformal factor are illustrated in figure \ref{fig:rho}.
\begin{figure}
    \centering
        \includegraphics[width=0.4\linewidth]{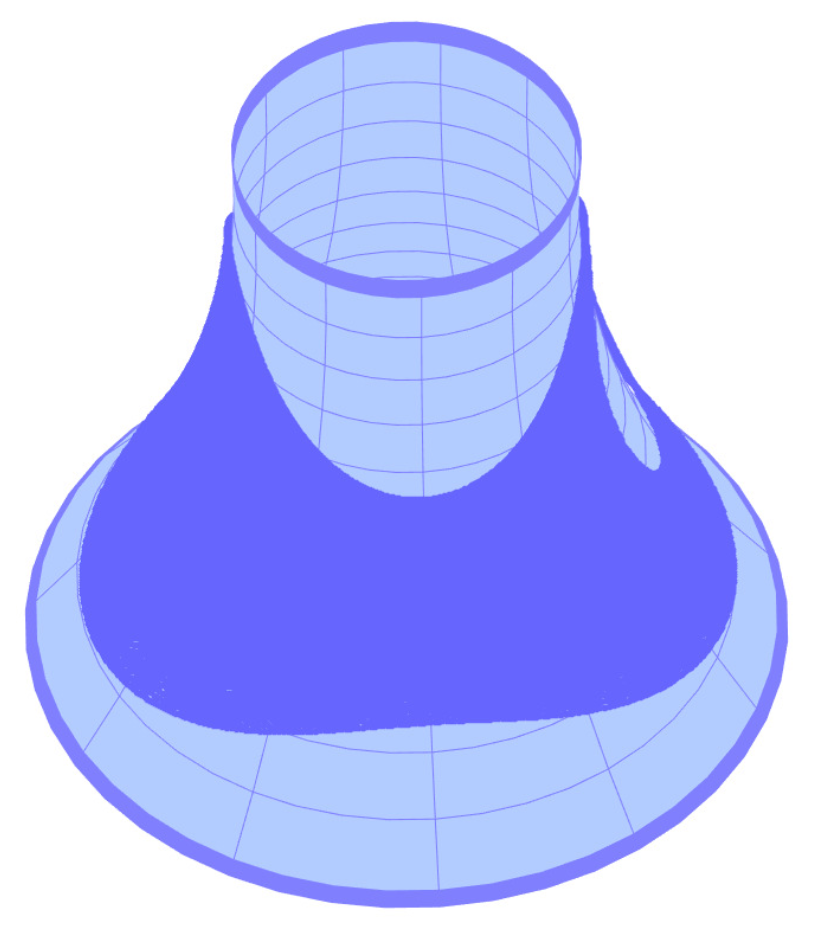}
        \caption{The manifold $(M,g)$ represented by a catenoid.}
    \label{fig:cat}
\end{figure}

\begin{figure}
    \centering
        \includegraphics[width=0.7\linewidth]{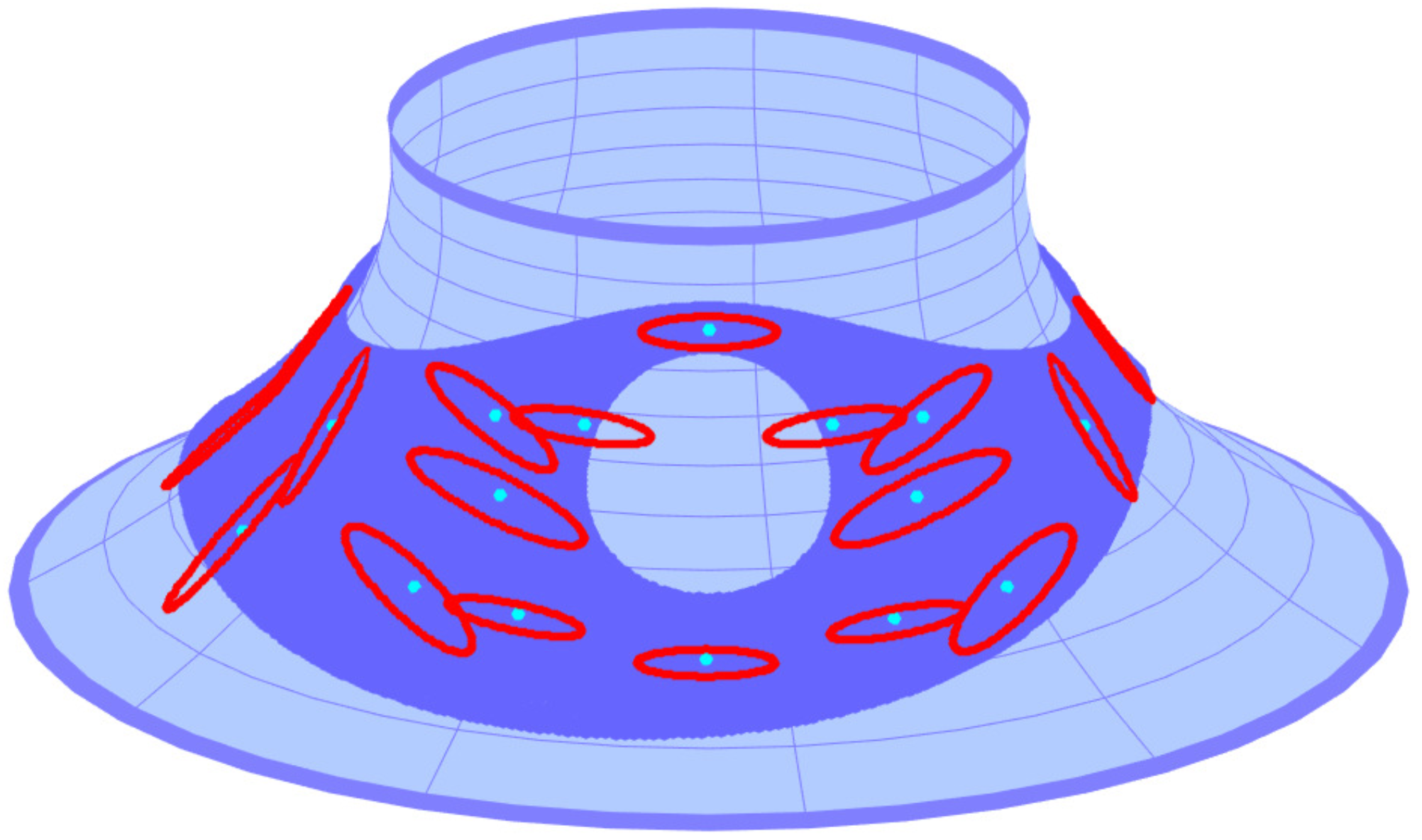}
        \caption{The true conductivity $\gamma$ visualized by its ellipse field on $(M,g)$.}
    \label{fig:trueEllipsefield}
\end{figure}
 
\begin{figure}
    \centering
        \includegraphics[width=\linewidth]{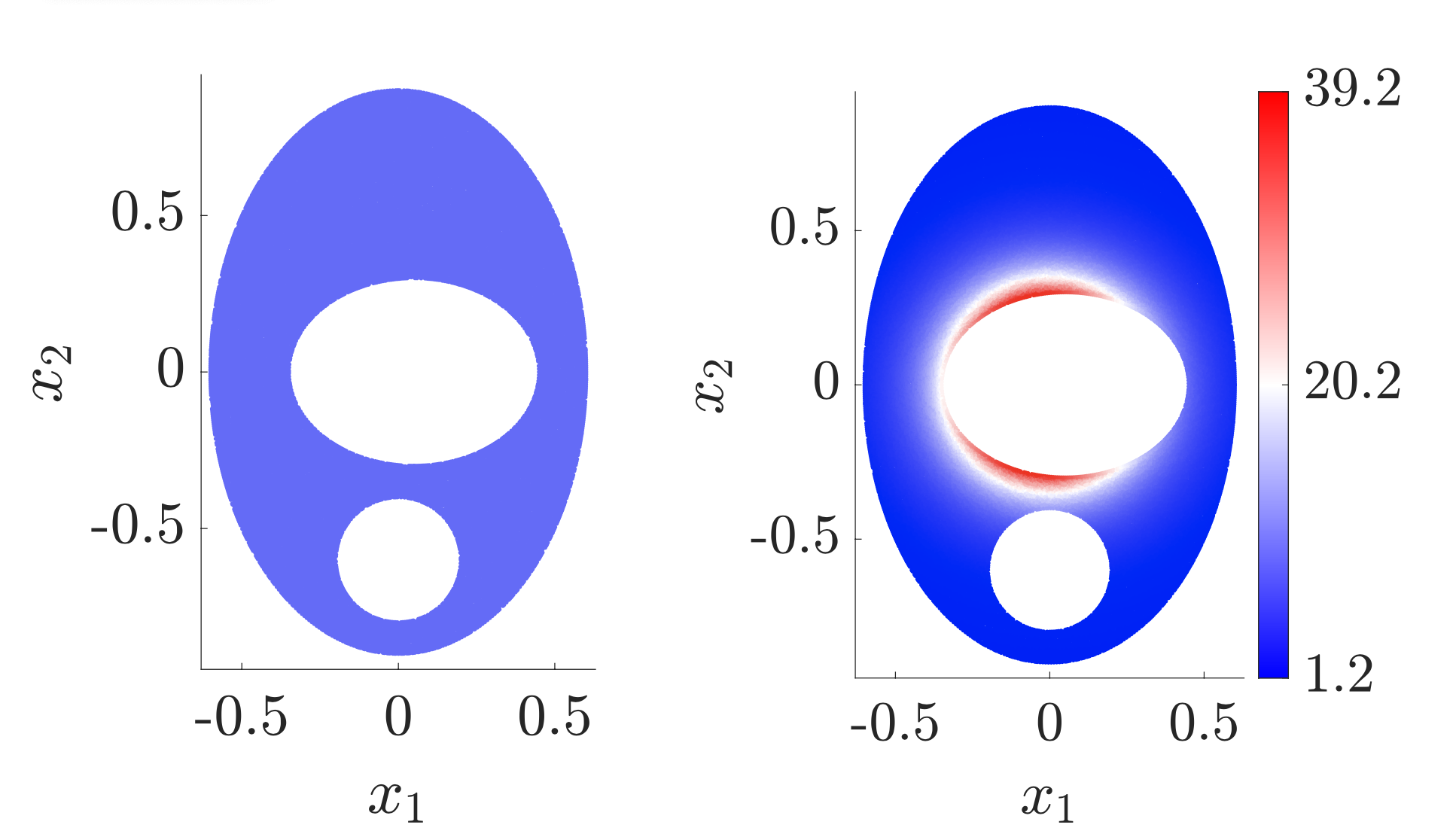}
        \caption{The parameter domain $M$ for the conformal parametrization of $(M,g)$ (left) and the square of the conformal factor, $\rho^2$, in $M$ (right).}
    \label{fig:rho}
\end{figure}
In the following we now discuss the implementation  of -- and numerical details for -- the reconstruction procedure of the conductivity on a compact subset of the catenoid.

\subsection{Implementation details}
The \textsc{Matlab} and \textsc{Python} code to generate the numerical example can be found on \textsc{GitLab} \cite{hjscGIT}

The Euclidean reconstruction procedure is implemented in \textsc{Python} and we use \textsc{FEniCS} \cite{fenics} to solve the PDEs. The power density data on $(M,g)$ is generated on a fine mesh with $N_1=94450$ nodes and we use a coarser mesh with $N_2=42013$ nodes to address the Euclidean reconstruction procedure. For both meshes, we use $\mathbb{P}_2$ elements. It is essential that we use second order basis functions in our numerical simulations; it was not possible with first order basis functions to compute approximate solutions so that condition \eqref{cond12M} and thus \eqref{cond12E} were satisfied. We note that condition \eqref{cond11M} can be met also with first order basis functions. We explain this phenomenon with the fact that \eqref{cond11M} does not involve any derivatives so that the computation of the derivative in the left hand side of \eqref{cond12M} requires the extra information by using second order basis functions.  
\label{secNumConf}



\subsection{Boundary functions} 
In accordance with \cite{BalMonard} we use only three boundary conditions to generate the power densities. These are simple polynomials in $x^{1}$ and $x^{2}$  given by  $$(f_{1},f_{2},f_{4})=(-x^2-0.1 (x^2)^2,x^1-x^2,0.2x^1x^2+x^2-0.1(x^1)^2)$$ (the third boundary condition is $f_{3}=f_{2}$.) According to our numerical computations with the chosen conductivity, the corresponding solutions $(u_{1},u_{2},u_3,u_{4})$ satisfy the conditions \eqref{cond11M}-\eqref{cond12M} and are used to construct the power density matrix $H$. 
The element $H_{44}$ is illustrated in figure \ref{fig:PowDen}.


\begin{figure}
    \centering
        \includegraphics[width=\linewidth]{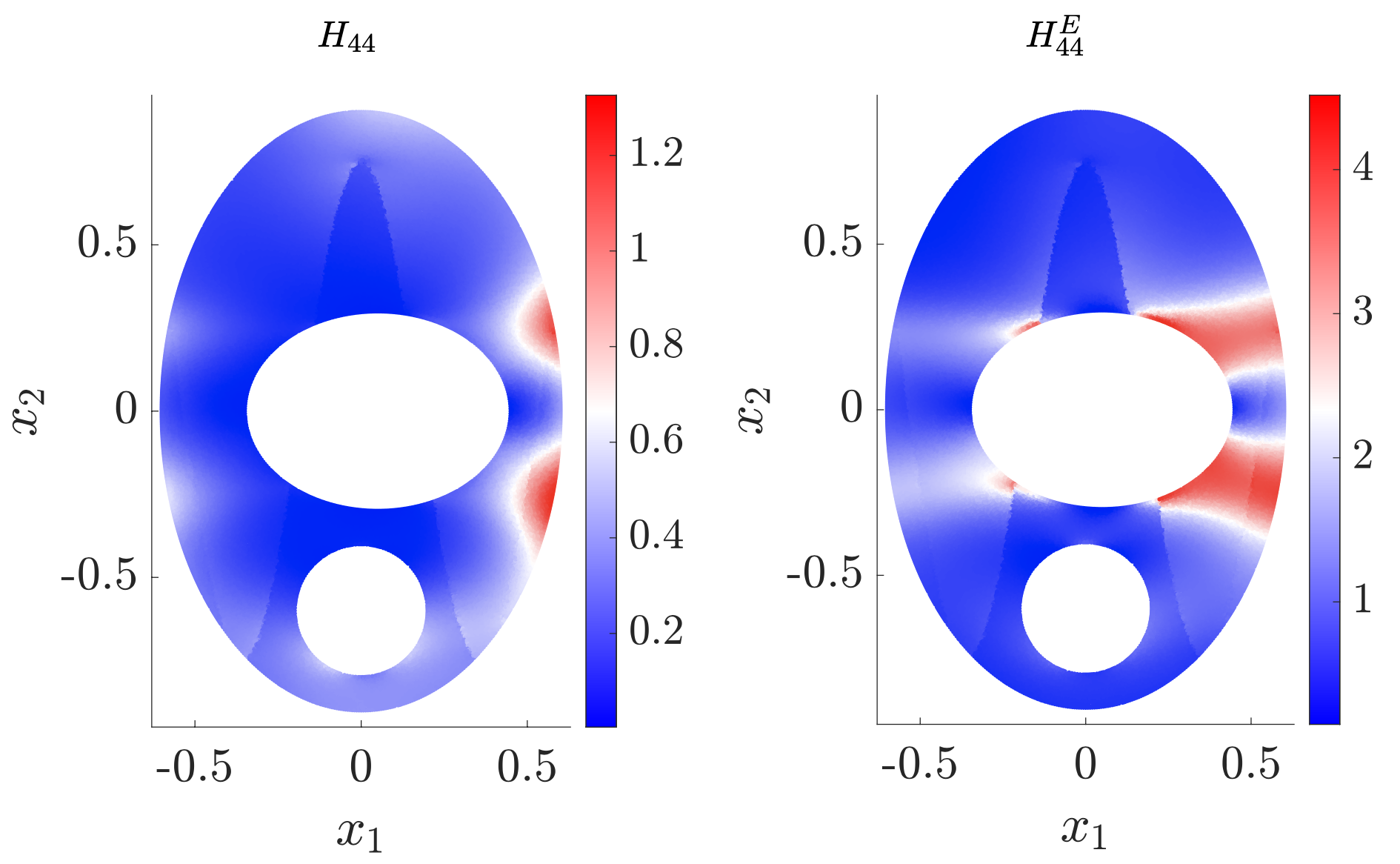}
        \caption{The transformation of the power density data from the manifold $(M,g)$ to the plane $(M,g_E)$ illustrated for the element $H_{44}$.}
    \label{fig:PowDen}
\end{figure}

In the following we go through the procedure in algorithm \ref{algoConf} in order to reconstruct $\gamma$ from the noise free $H$ and from a power density matrix perturbed by noise.

\subsubsection{Reconstruction from noise free data}

We first compute the power density data $H^E$ for the Euclidean domain $(M,g_E)$:
\begin{equation*}
    H^E(x^1,x^2)=\rho^2(x^1,x^2) \, H(x^1,x^2).
\end{equation*}
The transformation of the power density data from $H$ to $H^E$ is illustrated for the element $H_{44}$ in figure \ref{fig:PowDen}.

%
%
%
%

We parameterize $\gamma$ by three functions $\beta, \xi$ and $\zeta$:
\begin{equation}\label{gamNparam}
\gamma(\beta,\xi,\zeta)=\beta \underbrace{\begin{bmatrix}\xi & \zeta\\
\zeta & \frac{1+\zeta^2}{\xi}\end{bmatrix}}_{\text{{\Large $=\widetilde{\gamma}$}}} \; ,
\end{equation}
with $\beta = (\text{det }\gamma)^{\frac{1}{2}}$. The functions $\xi$ and $\zeta$ determine the normalized part of $\gamma$ denoted by $\widetilde{\gamma}$ and the function $\beta$ determines the determinant of $\gamma$. The true functions $\xi, \zeta$ and $\beta$ are given by the following expressions:
\begin{align*}
    \xi(x^1,x^2)&=2+\frac{3}{2} e^{-3\lp \lp x^1+\frac{3}{5}\rp^2+\lp x^2-\frac{5}{8}\rp^2 \rp}\\
    \zeta(x^1,x^2)&=\cos(2\arg(x^1+ix^2))\sin(2\arg(x^1+ix^2))\\
    \beta(x^1,x^2)&= \begin{cases}
    1+e^{-10(x^1)^2}+e^{-10(x^1-1)^2}+e^{-10(x^1+1)^2} & \text{for $\frac{3}{4}\cos\lp \frac{14\pi}{5}x^1\rp \leq x^2$}\\
    1 & \text{otherwise}.
    \end{cases}
\end{align*}
These choices are illustrated in the first row in figure \ref{fig:RecPlane}.
We now follow the Euclidean reconstruction procedure outlined in algorithm \ref{algo:Euclidean}. Here the first step consists of reconstructing $\widetilde{\gamma}$ and hence the functions $\xi$ and $\zeta$. The second step consists of reconstructing the angle $\theta$ to split the functionals $S_i = A \nabla_{g_E} u_{i}$ apart in the expression for the power densities $H_{ij}^E=A \nabla_{g_E} u_{i} \cdot A \nabla_{g_E} u_{j}$ (and for that only one submatrix of $H^E$ corresponding to $H^{E,(1)}$ is considered). The last step then consists of reconstructing the determinant of $\gamma$, hence the function $\beta$. The reconstructions are illustrated in the second row in figure \ref{fig:RecPlane}. The relative $L_2$-errors are given by 3.26\% ($\xi$), 7.62\% ($\zeta$) and 2.10\% ($\beta$). We note that the errors of $\xi$ and $\zeta$ are quite high relative to the fact that there is no noise in the data. The most difficult part for reconstruction is the piecewise constant cosine-curve appearing in $\beta$. Along this curve there appear artifacts in the reconstructions of all functions, but especially leading to the high errors for $\xi$ and $\zeta$. Furthermore,  artifacts appear in the reconstruction of $\xi$ and $\zeta$ around the points $(-0.46,0.31), (-0.24,-0.38), (0.38,-0.28)$ and $(0.31,-0.72)$. These artifacts are induced by the fact that the values on the left hand side in condition \eqref{cond12E} are very close to zero at these points. 

\begin{figure}[h!]
    \centering
    \includegraphics[width=\textwidth]{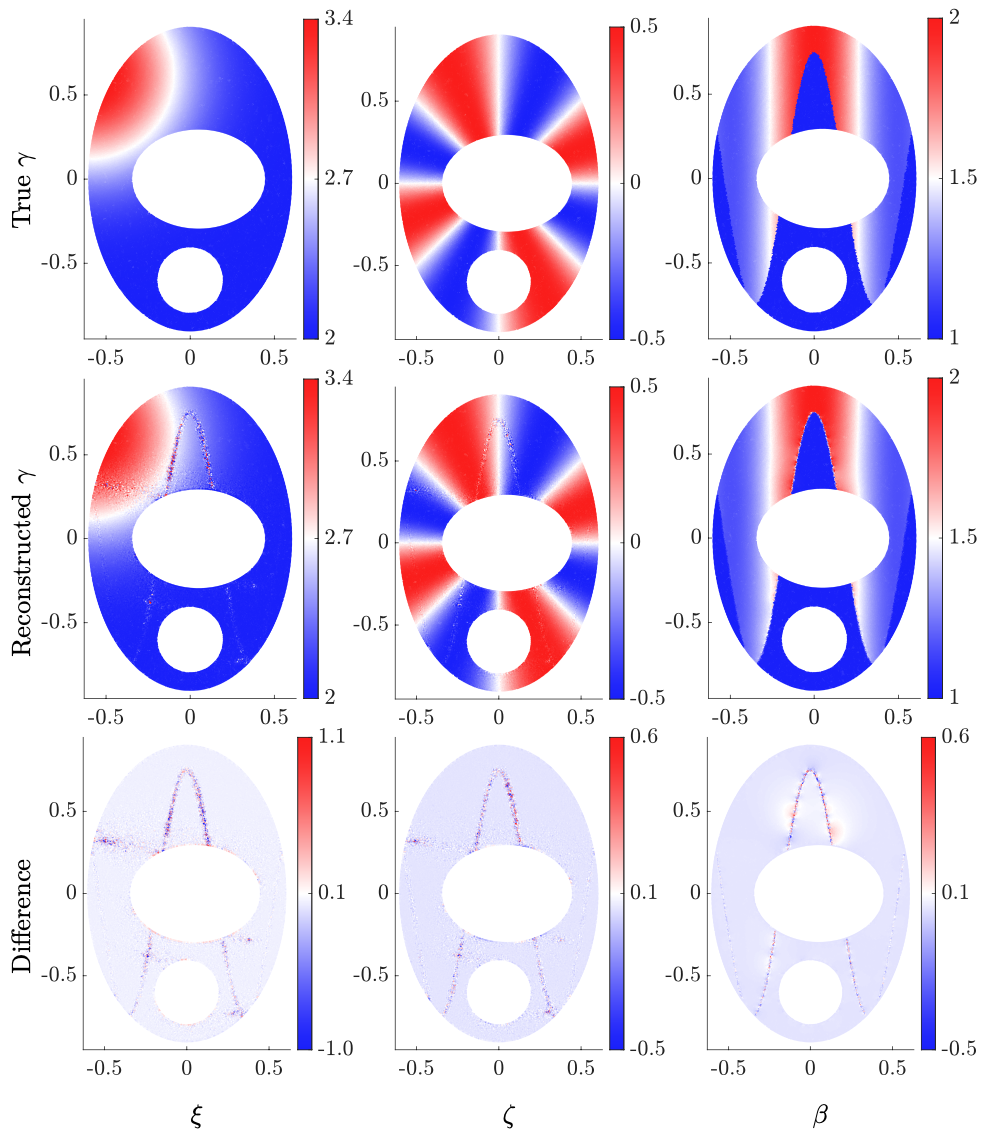}
    \caption{The true scalar functions $\xi$, $\zeta$ and $\beta$ determining the conductivity in $(M,g_E)$ (first row), their reconstructions (second row) and their difference (third row).}
    \label{fig:RecPlane}
\end{figure}

The ellipse field corresponding to $\gamma$ as discussed in section \ref{sec:ellipses} is illustrated (modulo a unifom scaling) in figure \ref{fig:RecCat}.


\begin{figure}
    \centering
    \includegraphics[width=0.7\linewidth]{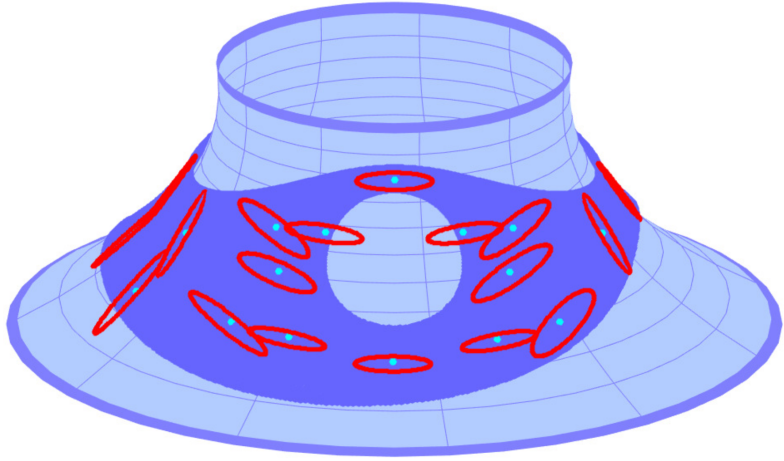}
    \caption{The reconstruction of $\gamma$ illustrated by ellipses in the tangent planes of $(M,g)$.}
    \label{fig:RecCat}
\end{figure}

To compare the reconstructed $\gamma$ with the true conductivity, $\gamma_{\text{true}}$, we use the difference measure in \eqref{ellipseDist}. We use the choice $\kappa=\frac{1}{4} \lp \lambda_1+\mu_1\rp^2$ and illustrate the relative difference $m^2(\gamma,\gamma_{\text{true}})/\kappa$ in figure \ref{fig:DiffRecCat}. Furthermore, we compute the accumulated relative difference:
\begin{equation*}
    \int_M \frac{m^2(\gamma,\gamma_{\text{true}})}{\kappa} \, \mathrm{d}V = 0.1306.
\end{equation*}
We observe that there are artifacts appearing along curves next to the circular hole. These correspond to the artifacts in the reconstruction of the planar functions, $\xi, \zeta$ and $\beta$, induced along the piecewise constant sine-curve appearing in $\beta$. Furthermore, there appear point clouds around the values, where condition \eqref{cond12M} and thus condition \eqref{cond12E} are close to being violated. 


\begin{figure}[h!]
    \centering
    \begin{minipage}[t]{\textwidth}
        \begin{minipage}[t]{\textwidth}
        \centering
        \includegraphics[width=0.6\linewidth]{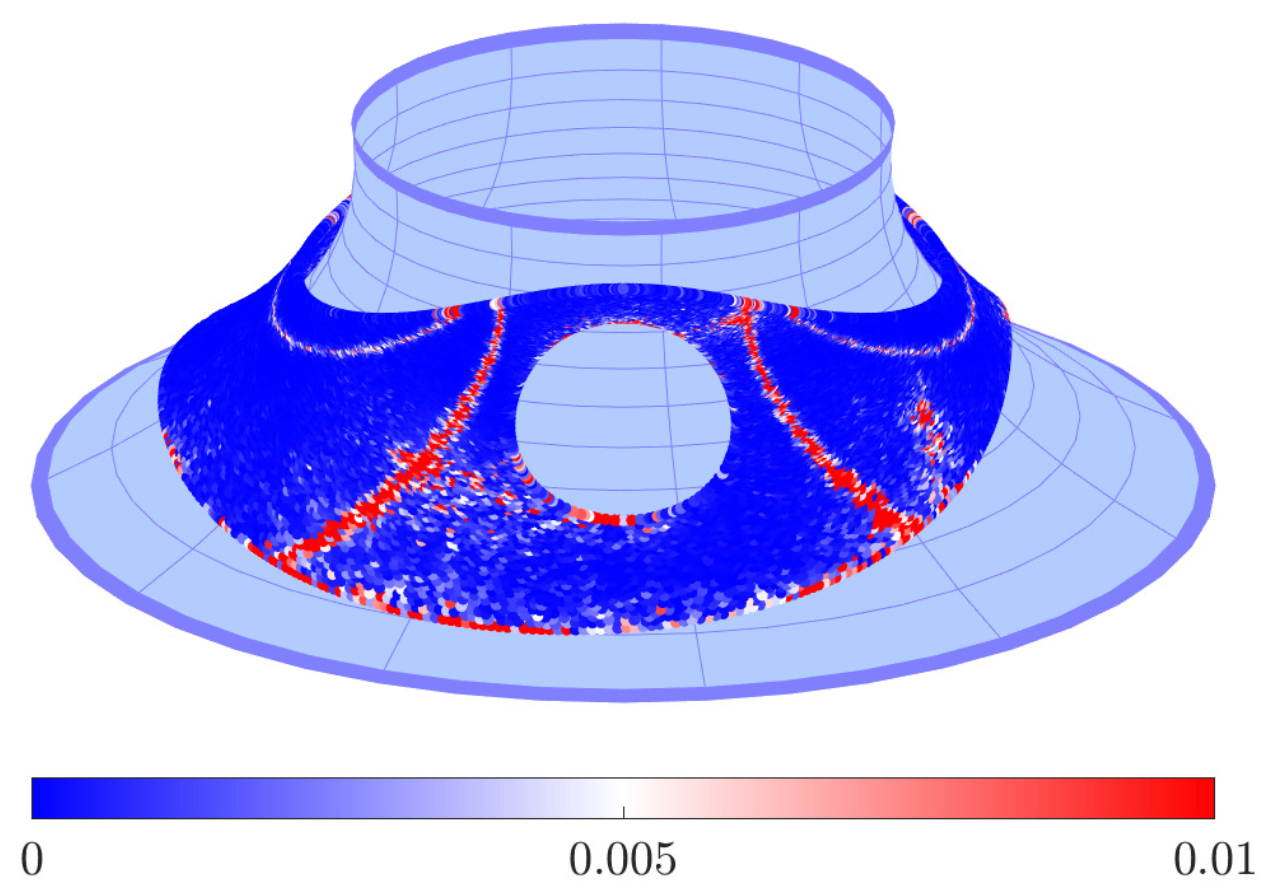}
    \end{minipage}
            \end{minipage}
    \caption{The coordinate invariant difference between the reconstructed conductivity, $\gamma$ and the true conductivity, $\gamma_{\text{true}}$, given by $m^2(\gamma,\gamma_{\text{true}})/\kappa$.}
    \label{fig:DiffRecCat}
    \end{figure}

\subsubsection{Reconstruction from noisy data}
We perturb the entries of the power density matrix $H$ on $(M,g)$ at each node with random noise:
\begin{equation*}
    \widetilde{H}_{ij} = H_{ij} + \frac{\alpha}{100}\frac{\norm{H_{ij}}_{L^2(M)}}{\norm{e_{ij}}_{L^2(M)}}e_{ij}
\end{equation*}
where $\alpha$ is the desired noise level and $e_{ij}$ are random perturbations collected as entries in the matrix $E.$ The norm is computed with respect to the metric $g=\rho^2 \, g_E$. The noise is generated in the discrete setting by drawing element wise Gaussian random variables from $\mathcal N(0,1)$ before normalising $E$. To maintain symmetry of $\widetilde{H}$ we compute $\frac{1}{2}(\widetilde{H}_{12}+\widetilde{H}_{21})$ for the off diagonal elements. The perturbation by noise is relative to the data on the manifold, but this is not the case for the Euclidean domain $(M,g_E)$. So after the transformation 
\begin{equation*}
    \widetilde{H}^E(x^1,x^2)=\rho^2(x^1,x^2) \, \widetilde{H}(x^1,x^2).
\end{equation*}
the noise is no longer Gaussian due to the modification by the conformal factor. The transformation of the noise by the transformation of the power density data is illustrated in figure \ref{fig:noisedist} for the element $H_{44}$ using a noise level of $$\frac{\norm{\widetilde{H}_{44}-H_{44}}_{L^2(M)}}{\norm{H_{44}}_{L^2(M)}}\cdot 100=0.70.$$

\begin{figure}
    \centering
    \includegraphics[width=\textwidth]{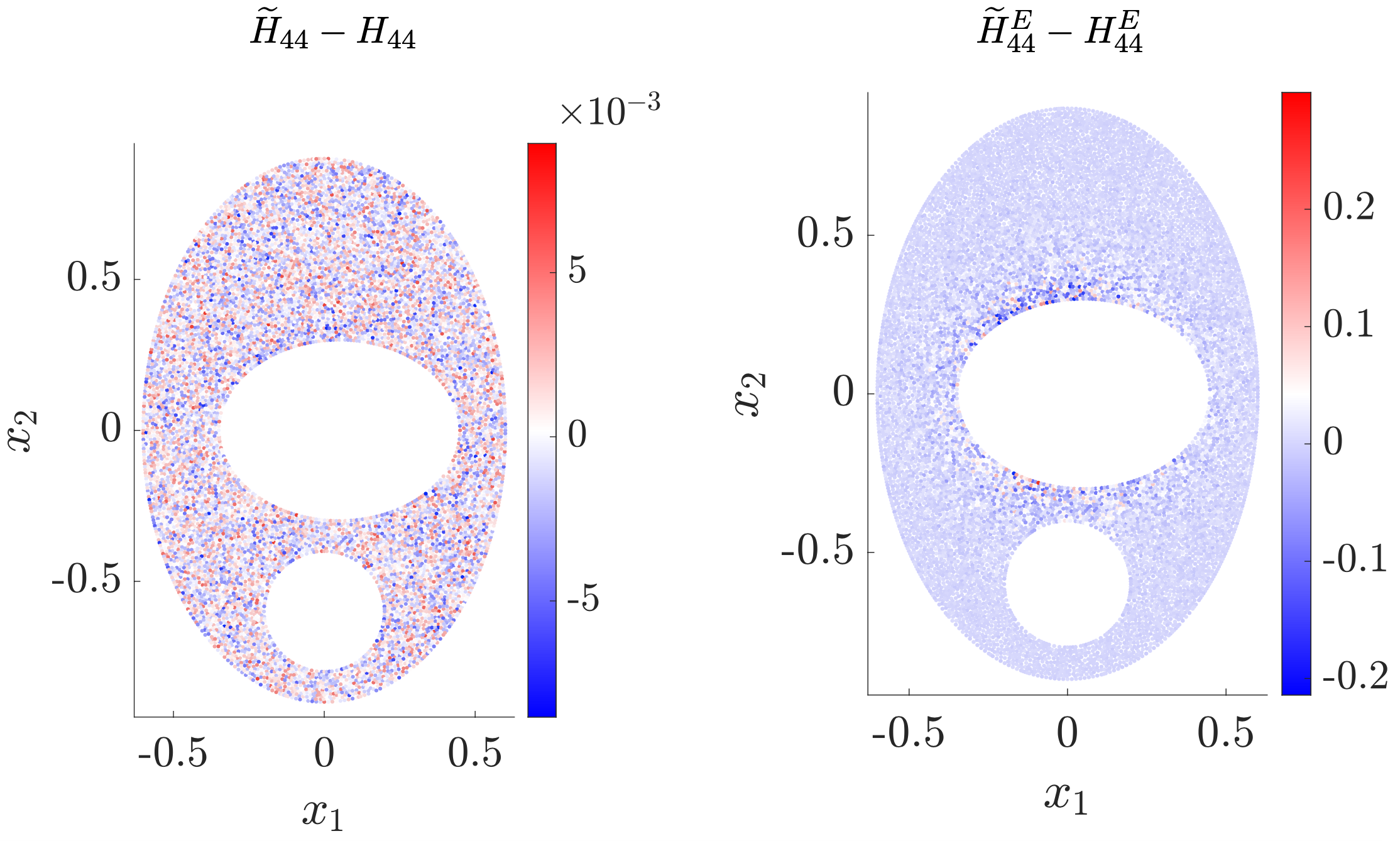}
    \caption{A noise level of 0.7\% influenced by the transformation of the power density data from the manifold $(M,g)$ to the plane $(M,g_E)$ inducing a noise level of 1.1\%. This is illustrated for the element $H_{44}$.}
    \label{fig:noisedist}
\end{figure}

    
The influence of the transformation by the square of the conformal factor $\rho$ is clearly visible (see an illustration of $\rho^2$ in figure \ref{fig:rho}), as for the Euclidean domain the noise is magnified by $\rho$ around the ellipse shaped hole in the center. For the Euclidean domain the noise level of $0.7\%$ corresponds to a noise level of $$\frac{\norm{\widetilde{H}_{44}^E-H_{44}^E}_{L^2(M)}}{\norm{H_{44}^E}_{L^2(M)}}\cdot 100=1.1.$$ Furthermore, we note that the noise level is no longer the same for all elements $H_{ij}^E$ as documented in table \ref{tab:Hnoise}.

\begin{table}[]
    \centering
    \begin{tabular}{c|c|c|c|c|c|}
         & $\widetilde{H}^E_{11}$ & $\widetilde{H}^E_{12}$ & $\widetilde{H}^E_{22}$ & $\widetilde{H}^E_{24}$ & $\widetilde{H}^E_{44}$ \\\hline
         Corresponding noise level& 1.2\% & 0.78\% & 1.0\% & 0.73\% & 1.1\% \\ \hline
    \end{tabular}
    \caption{Influence of a $0.7\%$ noise level of $\widetilde{H}$ on the noise level of $\widetilde{H}^E$.}
    \label{tab:Hnoise}
\end{table}

In the following we use the Euclidean reconstruction procedure to reconstruct $\gamma$ from noisy data $\widetilde{H}^E$ similarly to the previous section. However, we only add very small levels of noise for different reasons. The first reason is that adding noise on $(M,g)$ results in a higher noise level in $(M,g_E)$ which is differing from element to element of $H^E$ in all our simulations and it also results in a different distribution of the noise level. Furthermore, the reconstruction of $\xi$ and $\zeta$ is relatively unstable as already indicated by the reconstructions from noise free data in the previous section and this is also quantified in \cite[eq. 16]{BalMonard} (to the extent that the reconstruction of $\det(\gamma)^{\frac{1}{2}}$ is more stable than the reconstruction of $\tilde{\gamma}$). The last reason is that adding noise quickly results in that conditions \eqref{cond11E} and \eqref{cond12E} are violated. Especially reason three can be dealt with by using different regularization techniques as indicated in \cite{BalMonard}. However, we limit ourselves to only study the effect of this peculiar transformation of the noise on the reconstruction instead of addressing the previous issues. We are thus only able to add the extremely small amount of $0.0004\%$ noise on $(M,g)$ which corresponds to a noise level of $0.0004-0.0006\%$ in $(M,g_E)$ so that condition \eqref{cond12E} is still satisfied. The reconstructions are illustrated in the second row in figure \ref{fig:RecNoise4}.

\begin{figure}[h!]
    \centering
    \includegraphics[width=\textwidth]{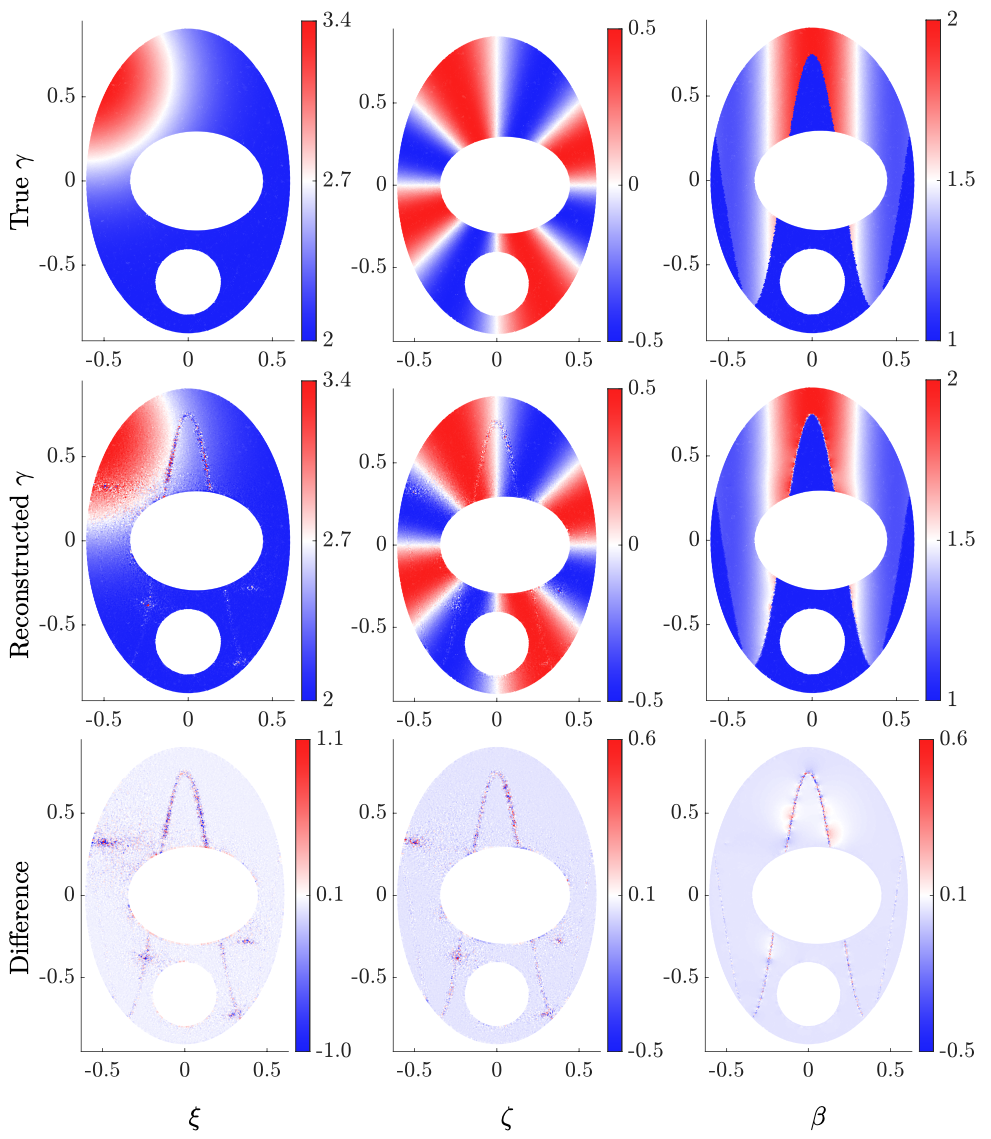}
    \caption{The true scalar functions $\xi$, $\zeta$ and $\beta$ determining the conductivity in $(M,g_E)$ in the first row and their reconstructions in presence of 0.0004\% noise (corresponding to 0.0004\%-$0.0006\%$ in $(M,g_E)$) in the second row. Their difference is shown in the third row.}
    \label{fig:RecNoise4}
\end{figure}
    
The relative $L_2$-errors are given by 4.8\% ($\xi$), 12.7\% ($\zeta$) and 2.11\% ($\beta$). Artifacts appear at similar locations to the noise free case, but the artifacts are spread out over larger regions in the presence of noise. Visually the biggest difference for the reconstructions in comparison to the noise free data can be seen for $\xi$ around the ellipse shaped hole. The transformation of the noise yields more artifacts around this boundary. The corresponding ellipse field of $\gamma$ on $(M,g)$ is very similar to the visualization in figure \ref{fig:RecCat} and for that purpose we omit it. However, in the comparison of the ellipses there is a visual difference between the reconstruction with noise and without noise so we illustrate the difference $m^2(\gamma,\gamma_{\text{true}})/\kappa$ in figure \ref{fig:DiffRecCatNoise}. The accumulated relative difference is given by:
\begin{equation*}
    \int_M \frac{m^2(\gamma,\gamma_{\text{true}})}{\kappa} \, \mathrm{d}V = 0.1484.
\end{equation*}
From the relative difference we see that the error between the true conductivity and $\gamma$ is larger. When comparing the figures there appear more artifacts in the reconstruction of $\gamma$ from noisy data and especially towards the lower boundary on the catenoid, which corresponds to the ellipse shaped hole in the parameter domain. So one can see the influence of how the magnification of the noise by $\rho$ around the ellipse shaped hole in $M$ results in more artifacts towards this boundary on $(M,g)$ in the reconstruction of $\gamma$. Furthermore, the point cloud left of the circular shaped hole is much bigger than in the noise free case. This point cloud appears, since condition \eqref{cond12M} is close to being violated here. Hence, the presence of noise makes it even harder to satisfy this condition and induces more values close to zero. 


\begin{figure}[h!]
    \centering
    \begin{minipage}[t]{\textwidth}
        \begin{minipage}[t]{\textwidth}
        \centering
        \includegraphics[width=0.6\linewidth]{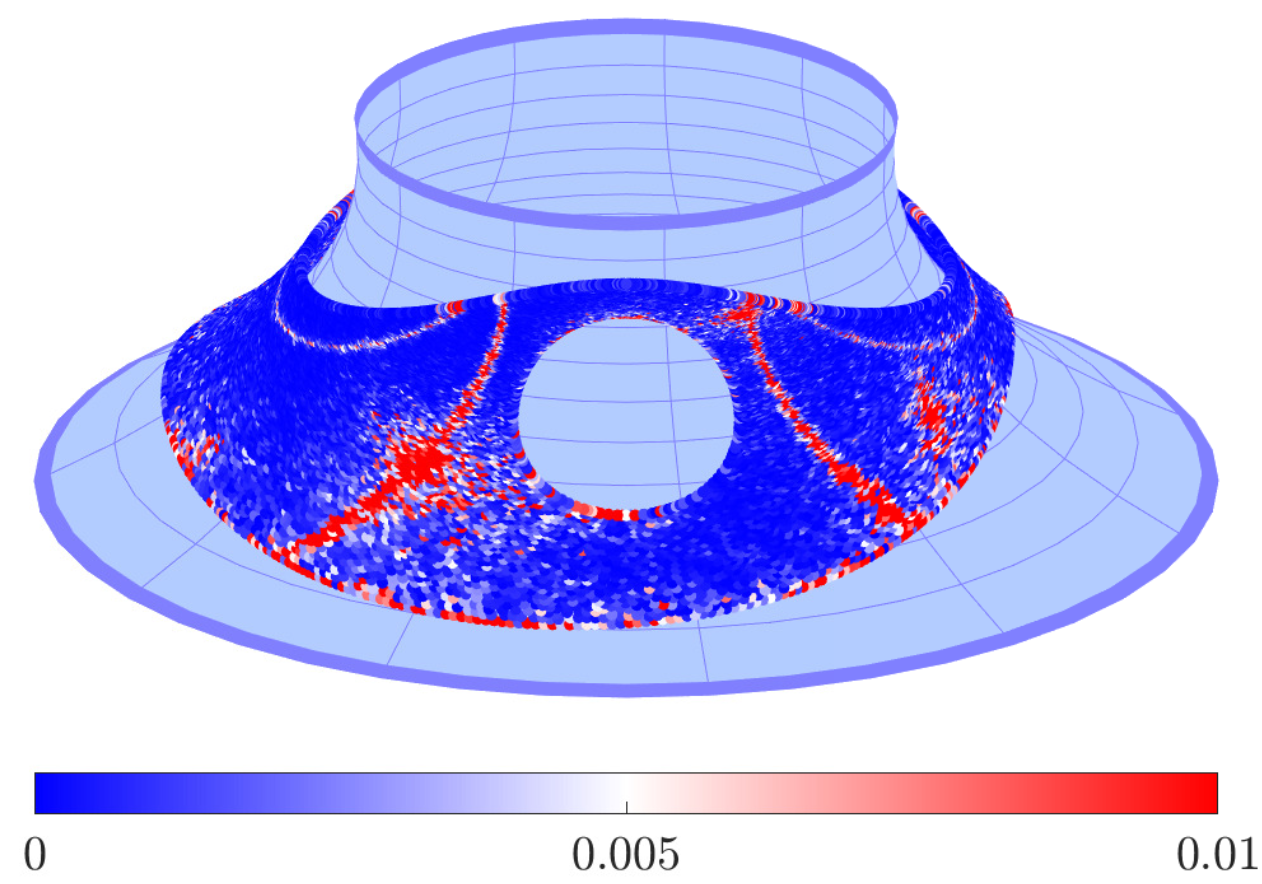}
    \end{minipage}
            \end{minipage}
    \caption{The coordinate invariant difference between the reconstructed conductivity, $\gamma$, in the presence of $0.0004\%$ noise and the true conductivity, $\gamma_{\text{true}}$, given by $m^2(\gamma,\gamma_{\text{true}})/\kappa$.}
    \label{fig:DiffRecCatNoise}
    \end{figure}
    
We note that while $\xi$ and $\zeta$ are highly affected by the low noise level of $0.0004\%$ in the Euclidean reconstruction procedure, there is almost no effect in the reconstruction for $\beta$. For this purpose we consider another simulation where we use the true anisotropy $\tilde{\gamma}$ composed of $\xi$ and $\zeta$ and add a noise level of $0.7\%$ on $(M,g)$ to see the effect of the noise on the reconstruction of $\beta$ resulting in the noise levels documented in table \ref{tab:Hnoise} for $\widetilde{H}^E$. We note again that it was not possible to add a higher noise level than that as this would have resulted in the violation of condition \eqref{cond11E}. In this case we don't need to use the second order basis functions, as we only need to meet condition \eqref{cond11E} and this reduces the mesh size to $N_1=23904$ nodes for the fine mesh and $N_2=10695$ for the coarser mesh. Furthermore, we use the coordinate functions as boundary conditions $(f_1,f_2)=(x^1,x^2)$ as this allows for adding a higher noise level while still satisfying condition \eqref{cond11E}. The reconstruction is illustrated in figure \ref{fig:RecNoisedet} and the relative $L_2$-error is 2.68\%. In this case the transformation of the noise by $\rho^2$ clearly induces artifacts appearing around the ellipse shaped hole. We consider $\gamma$ on the catenoid and illustrate the relative difference $m^2(\gamma,\gamma_{\text{true}})/\kappa$ in figure \ref{fig:DiffRecCatNoiseDet}. The accumulated relative difference is in this case given by:
\begin{equation*}
    \int_M \frac{m^2(\gamma,\gamma_{\text{true}})}{\kappa} \, \mathrm{d}V = 0.1006.
\end{equation*} 
Note that the accumulated relative difference is even less than when adding no noise or very little noise on all entries of $\gamma$. This is due to the fact that the true anisotropy was used for reconstruction and that the reconstruction of the determinant of $\gamma$ is more stable to noise. From the figure we see that since more noise was added it is even more significant how the transformation of the noise between $(M,g)$ and $(M,g_E)$ yields more artifacts towards the lower boundary of the catenoid. We note that in this case only the condition \eqref{cond11M} needs to be satisfied so there are no artifacts induced by points where condition \eqref{cond12M} is close to being violated.


\begin{figure}
    \centering
    \includegraphics[width=\textwidth]{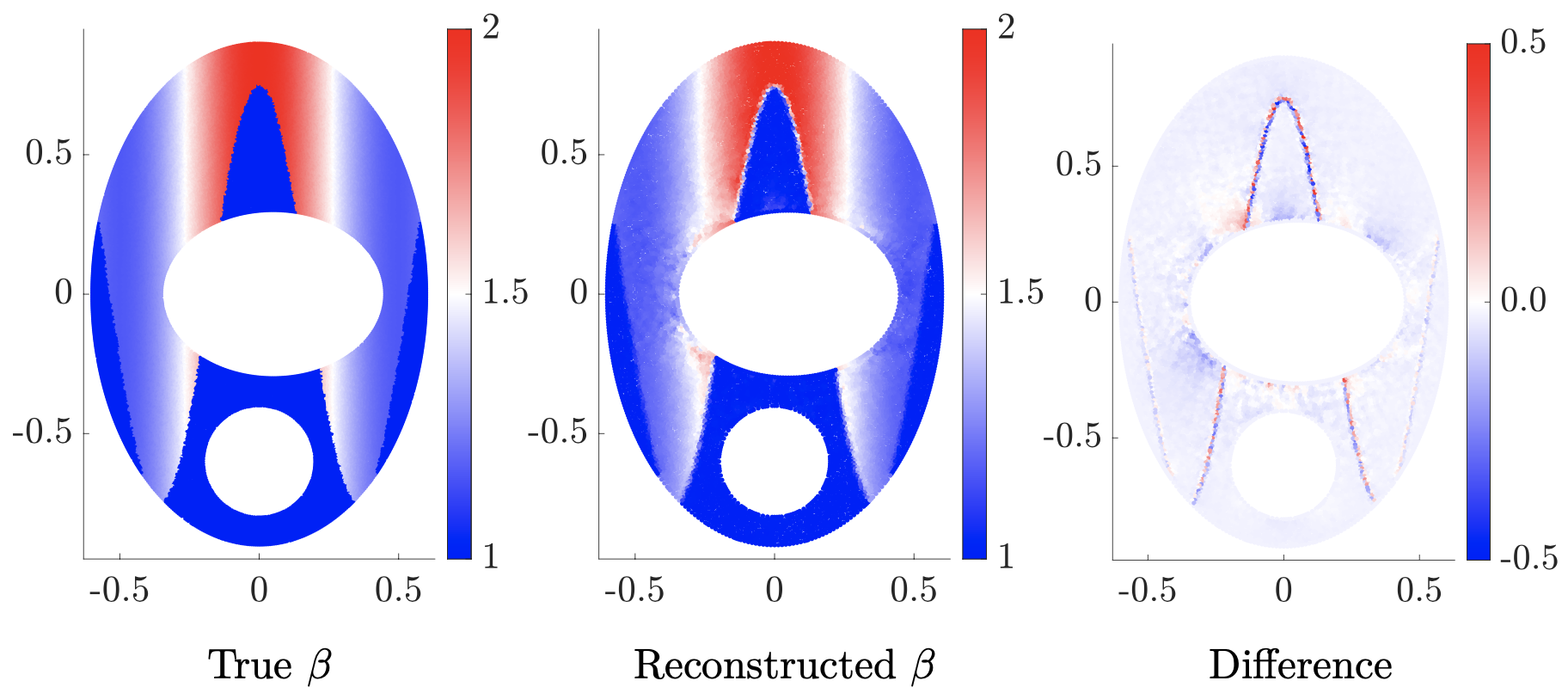}
    \caption{Using the true anisotropy $\tilde{\gamma}$ and reconstructing $\beta$ in the presence of 0.7\% noise (corresponding to 0.73\%-1.2\% noise in $(M,g_E)$).}
    \label{fig:RecNoisedet}
\end{figure}


\begin{figure}[h!]
    \centering
    \begin{minipage}[t]{\textwidth}
        \begin{minipage}[t]{\textwidth}
        \centering
        \includegraphics[width=0.6\linewidth]{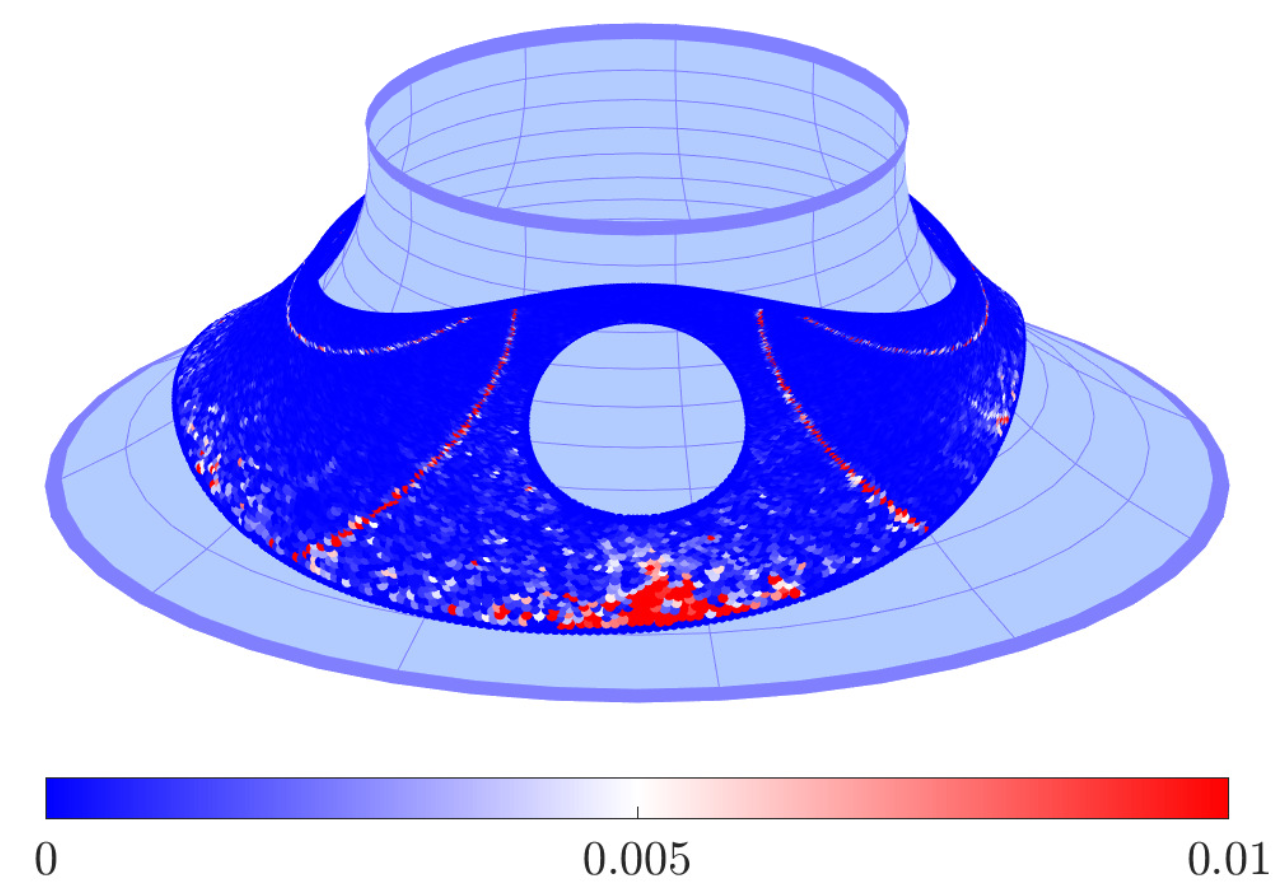}
    \end{minipage}
            \end{minipage}
    \caption{The coordinate invariant difference between the reconstructed conductivity, $\gamma$, in the presence of $0.7\%$ noise and using the true anisotropy $\tilde{\gamma}$ so that only $\beta$ is unknown. The difference is computed by $m^2(\gamma,\gamma_{\text{true}})/\kappa$.}
    \label{fig:DiffRecCatNoiseDet}
    \end{figure}

\subsection{Other parametrizations}
\subsubsection{A non-conformal parametrization}
As alluded to in section \ref{secRecProcNonConf} in general it might not be straightforward to find a conformal parametrization. This section simulates the setting for Algorithm \ref{algoM}. We start with a non-conformal parametrization of the catenoid in figure \ref{fig:cat} and determine a diffeomorphism that yields the conformal parametrization in \eqref{eqConfParam}. The parameter domain $N$ for the non-conformal parametrization is illustrated in figure \ref{fig:ParameterDomainGen}. 


\begin{figure}[h!]
    \centering
    \includegraphics[width=0.5\textwidth]{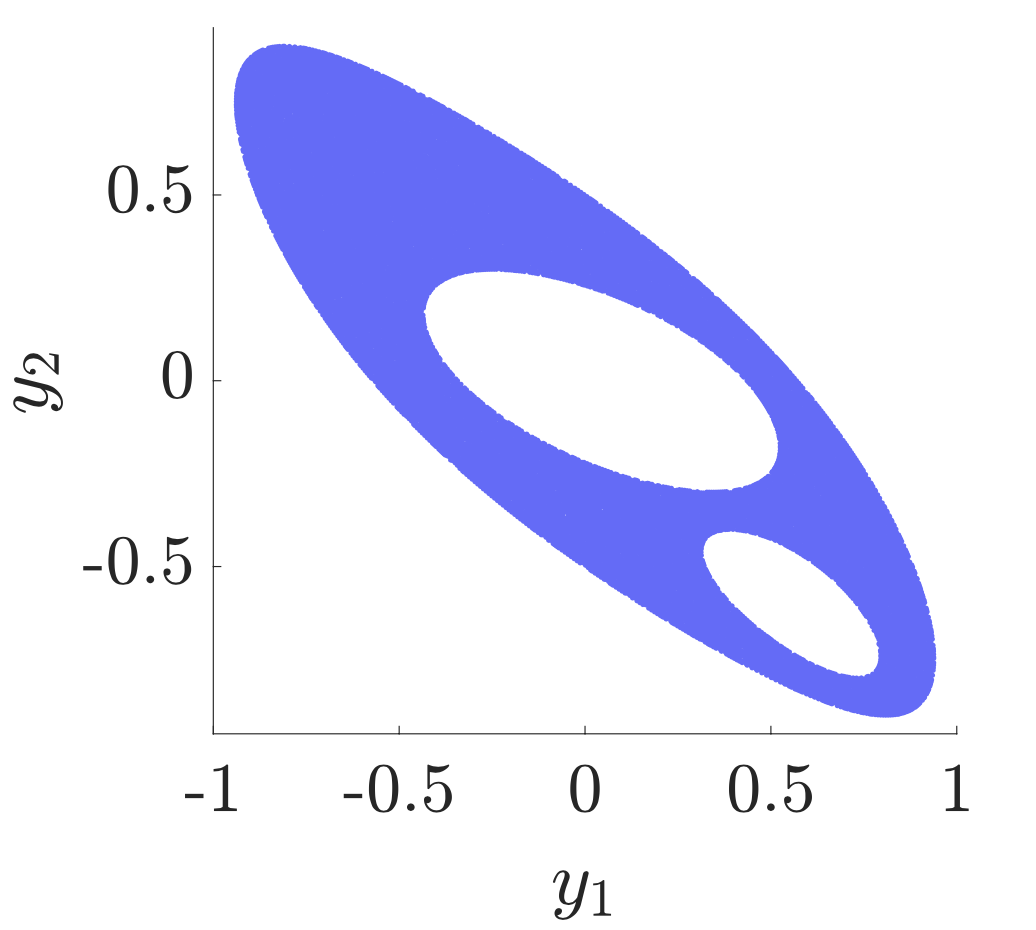}
    \caption{Parameter domain $N$ for the non-conformal parametrization of the catenoid.}
    \label{fig:ParameterDomainGen}
\end{figure}

Now consider the following parametrization $r_N$ of $(N,g_N)$ with corresponding metric matrix function $G_N$:

\begin{align*}
    r_N(y^1,y^2) &= \begin{pmatrix}
    \dfrac{(\sinh(y^1)+y^2)\,\omega(y^1,y^2)}{\alpha(y^1,y^2)}\\
    \dfrac{y^2\,\omega(y^1,y^2)}{\alpha(y^1,y^2)}\\
      \frac{1}{2} \log \lp \alpha(y^1,y^2)^2 \rp \\
    \end{pmatrix},\\
    G_N(y_1,y_2) &= \frac{\omega(y^1,y^2)^2}{\beta(y^1,y^2)}\begin{bmatrix} \cosh(y^1)^2 & \cosh(y^1) \\ \cosh(y^1) & 2  \end{bmatrix}\\
\intertext{where}
    \alpha(y^1,y^2)&=\sqrt{ \sinh(y^1)^2+2\sinh(y^1) y^2+2(y^2)^2} ,\\
    \beta(y^1,y^2)&=\cosh(y^1)^2+2\sinh(y^1)y^2+2(y^2)^2 - 1,\\
    \omega(y^1,y^2)&=\cosh \lp \frac{1}{2} \log \lp \alpha(y^1,y^2)^2 \rp \rp.
\end{align*}

The three boundary conditions that give rise to the corresponding power density data as in section \ref{secNumConf} is then given by  $$(f_{N,1},f_{N,2},f_{N,4})=(-y^2-0.1 (y^2)^2,\sinh(y^1),0.1(y^2)^2-0.1\sinh^2(y^1)+y^2)$$ (the third boundary condition is $f_{N,3}=f_{N,2}$.) The corresponding solutions$(u_{N,1},u_{N,2},u_{N,3},u_{N,4})$ satisfy according to Proposition \ref{prop:RelatingConditions} the conditions \eqref{cond11M}-\eqref{cond12M} and are used to construct the power density matrix $H^N$. The element $H_{44}^N$ is illustrated in figure \ref{fig:PowDen}. In the following we go through the steps in algorithm \ref{algoM} in order to reconstruct $\gamma_N$ from $H^N$. \\

In practice one needs to follow the procedure by \cite{gu2002a,gu2004a,schoen1997a} to obtain a diffeomorphism $\psi: N \rightarrow M$ that yields a global conformal parametrization of the catenoid. However in this case we already know an explicit diffeomorphism $\psi$ that gives the desired result: 
 \begin{align*}
     \psi(y^1,y^2) &= (x^1,x^2) = (\sinh(y^1)+y^2,y^2),\\
     \intertext{with inverse}
     \psi^{-1}(x^1,x^2) &= (y^1,y^2) = (\text{arcsinh}(x^1-x^2),x^2).
 \end{align*}
This yields the parametrization $r_M(x^1,x^2)=r_N(\psi^{-1}(x^1,x^2))$ as in equation \eqref{eqConfParam}.
The Jacobi matrices corresponding to $\psi$ and $\psi^{-1}$ are defined as follows:
\begin{align*}
    D\psi(y^1,y^2)&=\begin{bmatrix}
        \cosh(y^1) & 1\\ 0 & 1
    \end{bmatrix},\\
    D\psi^{-1}(x^1,x^2)&= \begin{bmatrix}
        \frac{1}{\sqrt{(x^1)^2}-2x^1 x^2 + (x^2)^2+1} & -\frac{1}{\sqrt{(x^1)^2}-2x^1 x^2 + (x^2)^2+1}\\ 0 & 1
    \end{bmatrix}.
\end{align*}
From the calculation $(D\psi^{-1})^t(x) G_M(\psi^{-1}(x)) D\psi^{-1}(x)$ we arrive at the conformal factor $\rho$ as in equation \eqref{eqConfFactor} so that $G_M(x^1,x^2)=\rho^2(x^1,x^2) \, G_E$ as desired. The transformation of the power density data from $H^N$ to $H^M$ is illustrated in figure \ref{fig:PowDenGen} for the element $H_{44}$. We note that since $H$ is invariant with respect to coordinate changes, $H^M$ and $H^N$ are identical and only differ in the appearance of the parameter domain. The remaining steps in the reconstruction procedure are identical to the approach in section \ref{secNumConf} in order to arrive at the reconstructed conductivity as in figure \ref{fig:RecCat}.


\begin{figure}
    \centering
    \includegraphics[width=0.9\textwidth]{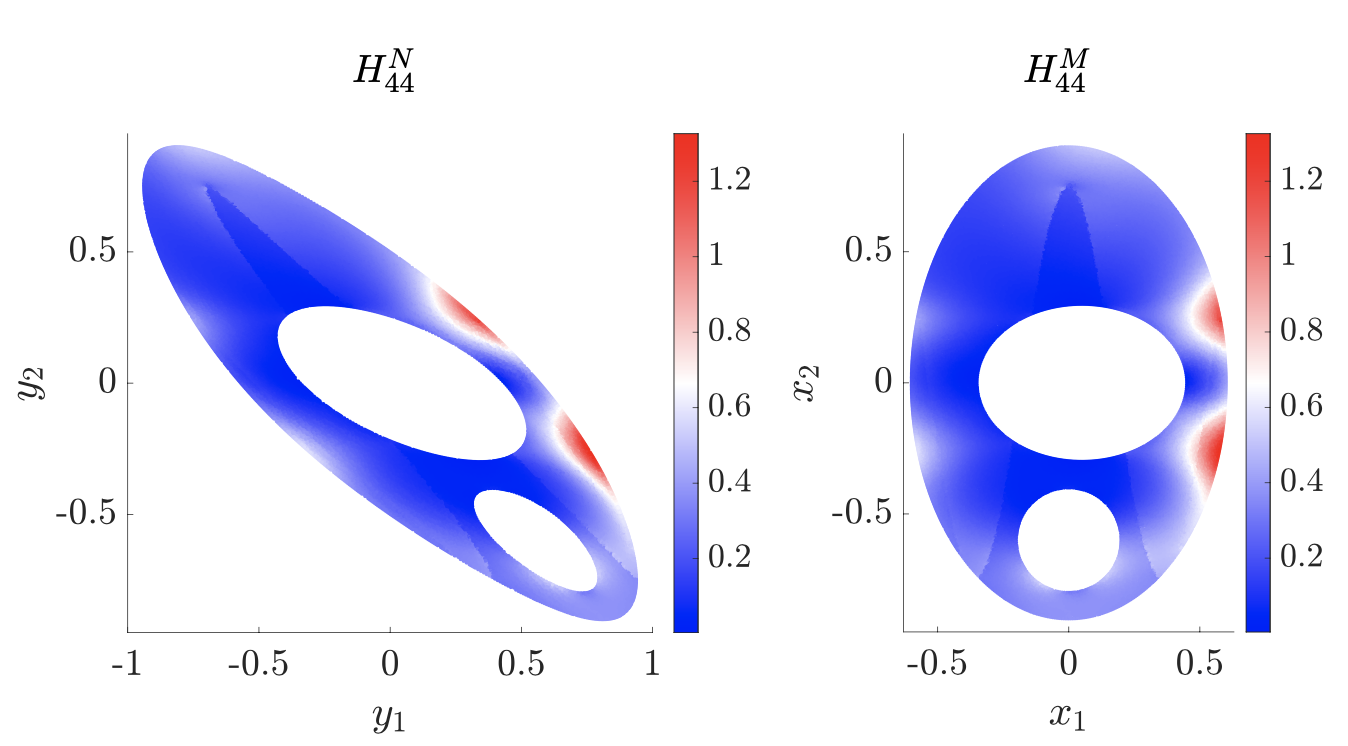}
    \caption{The transformation of the power density data from $(N,g_N)$ to $(M,g_M)$ for the element $H_{44}$.}
    \label{fig:PowDenGen}
\end{figure}

\subsubsection{A conformal parametrization with a periodic parameter domain}
Finally, we consider another conformal parametrization $(\widetilde{M},g_{\widetilde{M}})$ of the catenoid in figure \ref{fig:cat}. This is the standard textbook conformal parametrization of the catenoid:
\begin{equation}
 r_{\widetilde{M}}(\tilde{x}^{1}, \tilde{x}^{2})  =   (\cosh(\tilde{x}^{1}) \, \cos(\tilde{x}^{2}) , \cosh(\tilde{x}^{1})\,\sin(\tilde{x}^{2}) ,  \tilde{x}^{1}), \quad \tilde{x}^1\in \mathbb{R}, \quad \tilde{x}^2\in ]-\pi,\pi[.
\end{equation}
We illustrate the corresponding parameter domain in figure \ref{fig:TildeM}. Since the cosine and sine functions enter the parametrization $r_{\widetilde{M}}$  a periodic boundary appears when $\tilde{x}^2$ shifts from $-\pi$ to $\pi$ (and the other way around). If we compare this parameter domain to $M$ corresponding to the conformal parametrization of the catenoid this corresponds to "cut open" $M$ along a straight line. We illustrate $M$ with that line highlighted in red in figure \ref{fig:TildeM}. 


\begin{figure}
    \centering
    \includegraphics[width=0.9\textwidth]{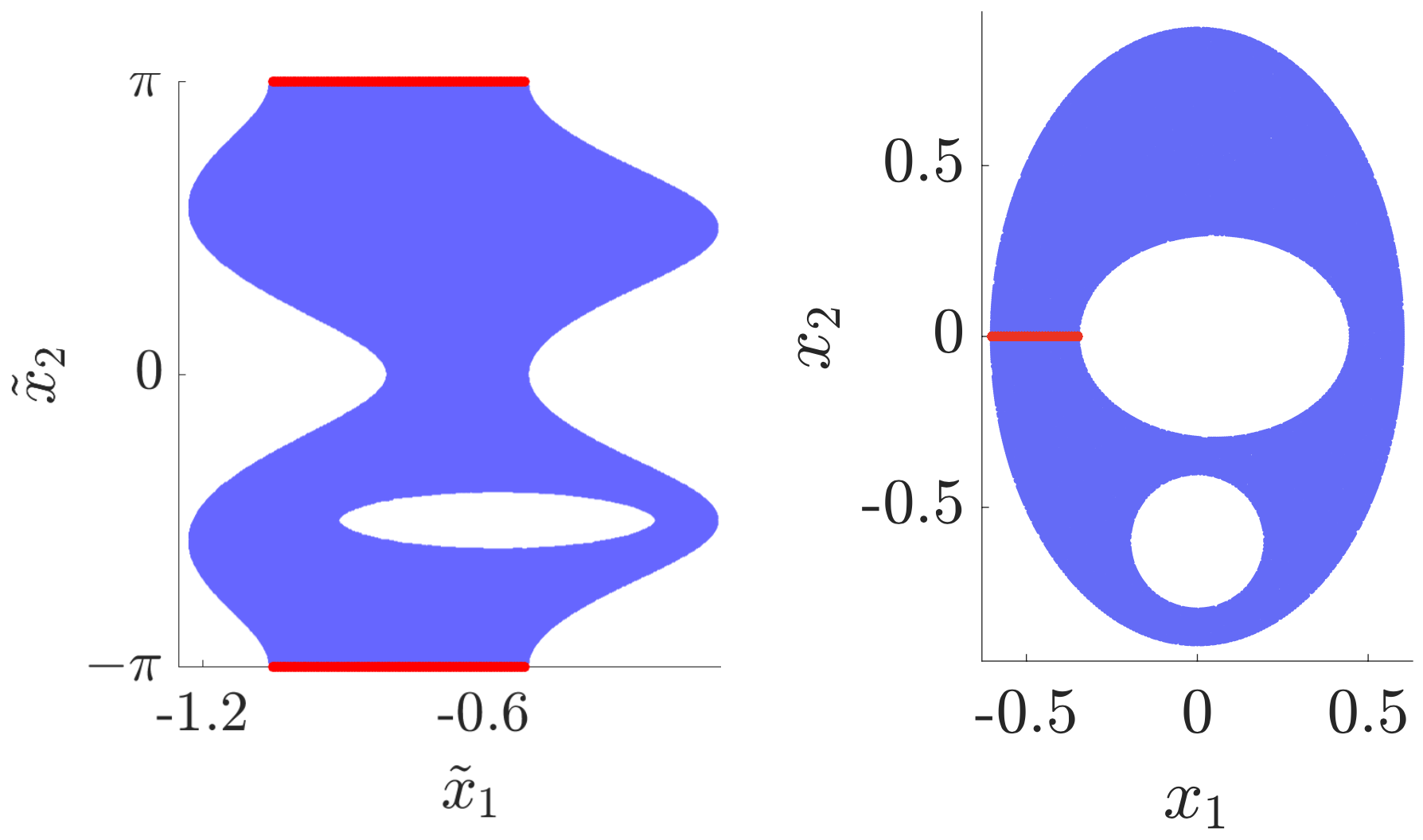}
    \caption{The parameter domains $\widetilde{M}$ and $M$ for the conformal parametrizations $r_{\tilde{M}}$ and $r_M$. The red part in $\widetilde{M}$ indicates the periodic boundary corresponding to "cut open" $M$ along the red line.}
    \label{fig:TildeM}
\end{figure}

The theory developed for the Euclidean reconstruction procedure as considered in the proof of theorem \ref{MainThm} relies on the fact that the domain has no periodic boundaries. For this setting it is guaranteed that there exist four boundary functions so that the conditions \eqref{cond11E}-\eqref{cond12E} are satisfied and it is possible to reconstruct the conductivity. So the theory allows one to reconstruct $\gamma_M$ from power densities when considering the parametrization $r_M$. However, as there exists a diffeomorphism $\phi$ that maps from the parameter domain $\widetilde{M}$ to $M$ this implies that 
in this particular case a suitable modification of the reconstruction procedure should work equally well in the $\tilde{M}$-setting. This diffeomorphism $\phi$ is defined explicitly by:
 \begin{align*}
     \phi(\tilde{x}^1,\tilde{x}^2) &= (x^1,x^2) = \lp\log \lp \sqrt{(\tilde{x}^1)^2+(\tilde{x}^2)^2} \rp,\arg(\tilde{x}^1 + i\tilde{x}^2)\rp,\\
     \intertext{with inverse}
     \phi^{-1}(x^1,x^2) &= (\tilde{x}^1,\tilde{x}^2) = (e^{x^1}\cos(x^2),e^{x^1}\sin(x^2)).
 \end{align*}
In this work we have limited ourselves to study the setting without periodic boundaries. However, in future work this should definitely be considered
as an integral part of a proof of the full conjecture \ref{Conj:General} stated in the introduction. Especially, because generalizations to higher genus surfaces require that one is able to deal with periodic boundaries both in theory and for the numerical simulations. The periodic problem is closely related to the limited view setting considered in \cite{jensen2023a}.


\section{Conclusion}
\label{secCon}
We have presented a new geometric setting for the reconstruction of anisotropic conductivities from power densities. Our main result generalizes the reconstruction method for the 2-dimensional Euclidean setting to  2-dimensional compact Riemannian manifolds with genus 0. The result is presented in a way that opens for further research in the setting of Riemannian manifolds with higher genus (in $2$-dimensions) and possibly in higher dimensions as well. The approach applies to other similar inverse problems with internal data, in particular the reconstruction problem for anisotropic conductivities from \emph{current densities}, c.f. \cite{bal2014a,bal2014b}.

\clearpage
\printbibliography

\end{document}